\numberwithin{equation}{section}
\newtheorem{theorem}[equation]{Theorem}
\newtheorem{lemma}[equation]{Lemma}
\newtheorem{proposition}[equation]{Proposition}
\newtheorem{example}[equation]{Example}
\begin{document}
\title[]{The spectral analysis of the difference quotient operator on model spaces}

\author[C. Bellavita]{Carlo Bellavita}
\email{carlo.bellavita@gmail.com}
\address{Departament of Matematica i Informatica, Universitat de Barcelona, Gran Via 585, 08007 Barcelona, Spain.}

\author[E.A. Dellepiane]{Eugenio A. Dellepiane}
\email{eugenio.dellepiane@unimi.it}
\address{Department of Mathematics, Università degli studi di Milano, Via C. Saldini 50, 20133, Italy.}

\author[J. Mashreghi]{Javad Mashreghi}
\email{javad.mashreghi@mat.ulaval.ca}
\address{D\'epartement de math\'ematiques et de statistique, Universit\'e Laval, Qu\'ebec, QC, Canada G1K 7P4.}

\subjclass{Primary 30H10; Secondary 47A10}
\keywords{Model spaces, one-component inner functions, spectrum}
\date{\today}

\thanks{This work was supported by grants NSERC and the Canada Research Chairs program. The first author and the second author are members of Gruppo Nazionale per l'Analisi Matematica, la Probabilità e le loro Applicazioni (GNAMPA) of Istituto Nazionale di Alta Matematica (INdAM). The first author was partially supported by PID2021-123405NB-I00 by the Ministerio de Ciencia e Innovación and by the Departament de Recerca i Universitats, grant 2021 SGR 00087.}

\begin{abstract}
We conduct a spectral analysis of the difference quotient operator $\text{Q}_\zeta^u$, associated with a boundary point $\zeta \in \partial \mathbb{D}$, on the model space $K_u$. We describe the operator's spectrum and provide both upper and lower estimates for its norm $\|\text{Q}_\zeta^u\|$, and furthermore discussing the sharpness of these bounds. Notably, the upper estimate offers a new characterization of the one-component property for inner functions.
\end{abstract}

\maketitle

\section{Introduction} \label{S:Introduction}
Let $\mathcal{X}$ be a family of functions defined on the set $\Omega \subset \mathbb{C}$, and fix a point $w \in \Omega$. The difference quotient operator on $\mathcal{X}$ is given by
\begin{equation}\label{E:def-Qw}
(\text{Q}_w f)(z) := \frac{f(z) - f(w)}{z-w}, \qquad z \in \Omega \setminus \{w\}, \quad f\in\mathcal{X}.  
\end{equation}
This operator appears in many discussions and exhibits numerous valuable properties. In particular, for the Hardy space on the unit disk, $H^2(\mathbb{D})$, the special but important case $w=0$ leads to the backward shift operator
\[
S^\ast f(z) := \text{Q}_0 f(z) = \frac{f(z) - f(0)}{z}, \qquad z \in \mathbb{D} \setminus \{0\},
\]
which is one of the most studied operators in all of operator theory \cite{MR827223}. In fact, for any $w \in \mathbb{D}$, the operator $\text{Q}_w$ is bounded on $H^2(\mathbb{D})$, and $\text{Q}_w$ and $S^*$ are related via the equation
\begin{equation}\label{E:relation-Q-S}
\text{Q}_w = (\text{I} - w S^*)^{-1} S^{*}, \qquad w \in \mathbb{D}.
\end{equation}

A natural obstacle arises when we move away from interior points of the disk $\mathbb{D}$, and instead consider difference quotients at a boundary point $\zeta \in \mathbb{T} := \partial \mathbb{D}$. We need to clarify the meaning of $f(\zeta)$. For example, if the ambient space is the whole Hardy space $H^2(\mathbb{D})$, this goal cannot be achieved, since there are functions $f \in H^2(\mathbb{D})$ for which the radial limit $f(\zeta):= \lim_{r \to 1} f(r\zeta)$ does not exist. However, when restricted to a proper subclass of $H^2(\mathbb{D})$, the situation could change. In this paper, we study the difference quotient operator associated with points on the unit circle $\mathbb{T}$, defined on model spaces $K_u$, which form an important family of subspaces of $H^2(\mathbb{D})$.

Model spaces arise as the closed $S^*$-invariant subspaces of $H^2(\mathbb{D})$. More explicitly, the model space $K_u$ is the orthogonal complement of the Beurling invariant subspace $uH^2(\mathbb{D})$, i.e.,
\[
K_u := H^2(\mathbb{D}) \ominus u H^2(\mathbb{D}),
\]
endowed with the $H^2$-norm. Here, $u$ is an inner function, i.e., a bounded holomorphic function on $\mathbb{D}$ such that $\lim_{r \to 1}|u(r\zeta)| = 1$ for $m$-a.e. $\zeta \in \mathbb{T}$, where $m$ is the normalized Lebesgue measure on $\mathbb{T}$. A detailed description of model spaces can be found in \cite{cima2000backward} and \cite{MR3526203}. By restricting $S^*$ to $K_u$, we obtain
\[
\begin{array}{cccc}
X_u : & K_u & \longrightarrow & K_u \\
& f & \longmapsto & S^{*}f
\end{array}
\]
which is a bounded operator on $K_u$. We also define $S_u := X_u^*$, the so-called compressed shift on $K_u$ \cite[Ch. 9]{MR3526203}. One of the fundamental results regarding the compressed shift is the Liv\v{s}ic--M\"{o}ller theorem \cite{MR0113142, MR0150592}, which describes the spectrum of $S_u$ to be precisely
\[ \sigma(S_u)= \sigma(u) := \{ \lambda \in \overline{\mathbb{D}} : \liminf_{z \to \lambda} |u(z)| = 0\},\]
where the latter is referred to as the spectrum of $u$. We also refer to the intersection $\sigma(u) \cap \mathbb{T}$ as the boundary spectrum of $u$. The spectrum $\sigma(u)$ plays a central role in our analysis, as the difference quotient operator $\text{Q}_\zeta^u$ given by \eqref{E:def-Qw} is well-defined and bounded on $K_u$ whenever $\zeta \in \mathbb{T} \setminus \sigma(u)$.

To study further different parts of the spectrum, recall that the \emph{point spectrum} of an operator $A \in B(H)$ is defined as \[
\sigma_p(A) := \{\lambda \in \mathbb{C} \colon \lambda \text{I} - A \,\, \text{is not injective} \},
\]
and its \emph{essential spectrum} is
\[
\sigma_e(A) := \{\lambda \in \mathbb{C} \colon \lambda \text{I} - A \,\, \text{is not a Fredholm operator} \}.
\]
An operator $A$ is said to be a \emph{Fredholm operator} if it has a closed range and both $\text{Ker}(A)$ and $\text{Ker}(A^*)$ are finite-dimensional. For the operator $X_u$, we know that
\begin{equation} \label{E:spectraXu1}
\sigma_p(X_u) = \left\{ \lambda \in \mathbb{D} \colon \overline{\lambda} \in \sigma(u) \right\},
\end{equation}
and
\begin{equation}\label{E:spectraXu2}
\sigma_e(X_u) = \left\{ \lambda \in \mathbb{T} \colon \overline{\lambda} \in \sigma(u) \right\}.
\end{equation}
The identity in \eqref{E:spectraXu1} corresponds to Equation $(9.15)$ in \cite{MR3526203}, while \eqref{E:spectraXu2} follows from the description of $\sigma_e(S_u)$ \cite[Proposition $9.26$]{MR3526203}, along with the fact that an operator $A$ is Fredholm if and only if its adjoint $A^*$ is also Fredholm.

For the remainder of this paper, we will consider inner functions $u$ for which the boundary spectrum is not the entire unit circle, i.e., $\sigma(u) \cap \mathbb{T} \neq \mathbb{T}$. We will also denote by $\| \cdot \|$ the operator norm in the Banach algebra $B(K_u)$ of bounded linear operators on $K_u$.

The organization of the paper is as follows. In Section \ref{S:main-results}, the main results, which are Theorems \ref{T:spectrumQ}, \ref{T:lower-estimate}, \ref{T:upper-estimate}, and \ref{T:characterization}, are stated. 

In Section \ref{S:Preliminaries}, we present some preliminary notions as well as some profound results about model spaces, the difference quotient operator, and one-component inner functions. In Section \ref{S:Lowerestimates}, we prove Theorem \ref{T:spectrumQ} and describe in more detail the structure of the spectrum $\sigma(\text{Q}_\zeta^u)$. In Section \ref{S:Lowerestimates2}, we first establish a technical lemma and then use it to prove Theorem \ref{T:lower-estimate}. In Section \ref{S:Upperestimates}, we prove Theorem \ref{T:upper-estimate}. In Section \ref{S:Example0}, we prove Theorem \ref{T:characterization}. In Section \ref{S:Example}, we present an example showing that the exponents of the derivatives involved in Theorems \ref{T:lower-estimate} and \ref{T:upper-estimate} are sharp. Finally in Section \ref{S:Example2}, as concluding remarks, we discuss a connection between $\text{Q}_\zeta^u$ and local Dirichlet spaces, and present a formula for the norm of the resolvent operator of the compressed shift $S_u$, in terms of the norm $\|\text{Q}_\zeta^u\|$.

\section{Main results} \label{S:main-results}
In our first result, we describe the spectrum of $\text{Q}_\zeta^u$ in terms of the spectrum of $u$ and we deduce a lower bound for its operator norm.

\begin{theorem}\label{T:spectrumQ}
Let $u$ be an inner function and $\zeta \in \mathbb{T} \setminus \sigma(u)$. Then 
\begin{eqnarray}
\sigma(\text{Q}_\zeta^u) &=& \left\{\frac{\overline{\eta}}{1-\zeta \overline{\eta}} \colon \eta\in\sigma(u)\right\}, \label{E:spectrumQ}\\
\sigma_p(\text{Q}_\zeta^u) &=& \left\{\frac{\overline{\eta}}{1-\zeta \overline{\eta}} \colon \eta\in\sigma(u)\cap \mathbb{D}\right\}, \label{E:spectraQz1}  \\
\sigma_e(\text{Q}_\zeta^u) &=& \left\{\frac{\overline{\eta}}{1-\zeta \overline{\eta}} \colon \eta\in\sigma(u)\cap \mathbb{T}\right\}. \label{E:spectraQz2} 
\end{eqnarray}
Moreover, the lower estimate
\begin{equation}\label{E:spectrumQest}
\|\text{Q}_\zeta^u\| \geq \frac{1}{\operatorname{dist}\left(\zeta,\sigma(u)\cap\mathbb{T}\right)}
\end{equation}
holds.
\end{theorem}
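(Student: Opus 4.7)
The strategy is to reduce everything to the known spectral data of $X_u$ via a Möbius-type operator identity that extends \eqref{E:relation-Q-S} to the boundary.

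First I would establish that, under the hypothesis $\zeta \in \mathbb{T} \setminus \sigma(u)$, the operator $\text{I} - \zeta X_u$ is invertible on $K_u$ and
\begin{equation*}
\text{Q}_\zeta^u = (\text{I} - \zeta X_u)^{-1} X_u.
\end{equation*}
Invertibility follows because $\sigma(X_u) = \overline{\sigma(u)}$ avoids $\overline{\zeta}$. For the identity, one may argue by analytic continuation from \eqref{E:relation-Q-S} in the parameter $w$, or directly: if $f \in K_u$ and $g := (\text{I} - \zeta X_u)^{-1} X_u f$, applying $\text{I} - \zeta X_u$ and multiplying by $z$ gives $(z-\zeta)g(z) = f(z) - f(0) - \zeta g(0)$. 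Since $\zeta \notin \sigma(u)$, both $f$ and $g$ extend analytically through $\zeta$, and evaluation at $z = \zeta$ forces $g(0) = \overline{\zeta}(f(\zeta) - f(0))$, after which the formula collapses to $g = \text{Q}_\zeta^u f$.

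Once this identity is in hand, the spectral descriptions follow from spectral-mapping arguments applied to the Möbius function $\varphi(\mu) = \mu/(1-\zeta\mu)$, which is holomorphic on a neighborhood of $\sigma(X_u)$. Equation \eqref{E:spectrumQ} is then immediate from the standard spectral mapping theorem combined with $\sigma(X_u) = \{\overline{\eta} : \eta \in \sigma(u)\}$. For \eqref{E:spectraQz1} and \eqref{E:spectraQz2}, I would use the factorization
\begin{equation*}
\text{Q}_\zeta^u - \lambda\,\text{I} = (1 + \zeta\lambda)\left(X_u - \frac{\lambda}{1+\zeta\lambda}\,\text{I}\right)(\text{I} - \zeta X_u)^{-1},
\end{equation*}
valid whenever $\lambda \neq -\overline{\zeta}$. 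Since $(\text{I} - \zeta X_u)^{-1}$ is a bijection, $\text{Q}_\zeta^u - \lambda \text{I}$ fails to be injective, respectively Fredholm, precisely when $X_u - \mu \text{I}$ does, where $\mu = \lambda/(1+\zeta\lambda)$. Solving this relation for $\lambda$ in terms of $\mu = \overline{\eta}$ and invoking \eqref{E:spectraXu1} and \eqref{E:spectraXu2} yields \eqref{E:spectraQz1} and \eqref{E:spectraQz2}.

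The norm estimate \eqref{E:spectrumQest} then follows from $\|\text{Q}_\zeta^u\| \geq \rho(\text{Q}_\zeta^u)$: restricting the spectral radius computation to $\eta \in \sigma(u) \cap \mathbb{T}$, one has $\overline{\eta} = 1/\eta$, hence $|1 - \zeta\overline{\eta}| = |\zeta - \eta|$ and $|\overline{\eta}/(1-\zeta\overline{\eta})| = 1/|\zeta - \eta|$, so taking the supremum over such $\eta$ produces $1/\operatorname{dist}(\zeta, \sigma(u) \cap \mathbb{T})$. The main obstacle is the very first step, namely justifying that boundary evaluation $f \mapsto f(\zeta)$ is well-defined and continuous on $K_u$, so that $\text{Q}_\zeta^u$ is indeed bounded and really coincides with $\varphi(X_u)$; this rests on analytic continuation of model-space functions across $\mathbb{T} \setminus \sigma(u)$, a fact that Section~\ref{S:Preliminaries} is expected to record.
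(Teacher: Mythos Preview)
Your proposal is correct and follows essentially the same route as the paper: the paper \emph{defines} $\text{Q}_\zeta^u$ by the identity $(\text{I}-\zeta X_u)^{-1}X_u$ in \eqref{E:def-Qu} (so your first step is already granted), then derives the same factorization $\lambda\text{I}-\text{Q}_\zeta^u=(1+\zeta\lambda)(\text{I}-\zeta X_u)^{-1}\bigl(\tfrac{\lambda}{1+\zeta\lambda}\text{I}-X_u\bigr)$, reads off the three spectra from \eqref{E:spectraXu1}--\eqref{E:spectraXu2}, and obtains \eqref{E:spectrumQest} from the spectral-radius bound exactly as you do. The only cosmetic difference is that the paper treats the exceptional value $\lambda=-\overline{\zeta}$ explicitly rather than absorbing it into the M\"obius picture.
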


We also prove another lower estimate, which turns out to be sharper than \eqref{E:spectrumQest} in some special cases.

\begin{theorem} \label{T:lower-estimate}
Let $u$ be an inner function and $\zeta\in\mathbb{T}\setminus \sigma(u)$. Then,
\begin{equation}\label{E:lower-estimate}
\|\text{Q}_\zeta^u\| \geq \frac{\, |u''(\zeta)| \,}{\, 2|u'(\zeta)| \,}.
\end{equation}
\end{theorem}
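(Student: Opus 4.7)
The plan is to obtain the lower bound by testing the operator $\text{Q}_\zeta^u$ against the boundary reproducing kernel at $\zeta$, namely
\[
k_\zeta^u(z) = \frac{1 - \overline{u(\zeta)}u(z)}{1 - \overline{\zeta}z},
\]
which belongs to $K_u$ since $\zeta \in \mathbb{T}\setminus\sigma(u)$ ensures $u$ extends analytically across $\zeta$. The key structural fact I would invoke from Section~\ref{S:Preliminaries} is the Ahern–Clark type identity $\|k_\zeta^u\|^2 = k_\zeta^u(\zeta) = |u'(\zeta)|$, together with the fact that $k_\zeta^u$ acts as a bona fide reproducing kernel at the boundary point $\zeta$.

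Using this reproducing property, my starting point is
\[
\langle \text{Q}_\zeta^u k_\zeta^u, k_\zeta^u \rangle = (\text{Q}_\zeta^u k_\zeta^u)(\zeta) = (k_\zeta^u)'(\zeta),
\]
where the last equality is nothing but the definition of derivative applied to the analytic function $k_\zeta^u$ at $\zeta$: the quotient $(k_\zeta^u(z)-k_\zeta^u(\zeta))/(z-\zeta)$ has a removable singularity at $\zeta$ with value $(k_\zeta^u)'(\zeta)$. Cauchy–Schwarz then yields
\[
|(k_\zeta^u)'(\zeta)| = |\langle \text{Q}_\zeta^u k_\zeta^u, k_\zeta^u \rangle| \leq \|\text{Q}_\zeta^u\|\,\|k_\zeta^u\|^2 = \|\text{Q}_\zeta^u\|\,|u'(\zeta)|,
\]
so the whole estimate \eqref{E:lower-estimate} reduces to showing $|(k_\zeta^u)'(\zeta)| = |u''(\zeta)|/2$.

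To establish this last identity, I would expand the numerator and denominator of $k_\zeta^u$ in Taylor series around $\zeta$. Using $|u(\zeta)|=1$, the numerator $1-\overline{u(\zeta)}u(z)$ vanishes at $\zeta$ and equals
\[
-\overline{u(\zeta)}u'(\zeta)(z-\zeta) - \tfrac{1}{2}\overline{u(\zeta)}u''(\zeta)(z-\zeta)^2 + O((z-\zeta)^3),
\]
while the denominator is simply $-\overline{\zeta}(z-\zeta)$. Dividing, one reads off the Taylor expansion of $k_\zeta^u$ at $\zeta$, whose linear coefficient is $\tfrac{1}{2}\zeta\overline{u(\zeta)}u''(\zeta)$; its modulus is $|u''(\zeta)|/2$.

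The only conceptual obstacle is checking that all of this is legitimate: that $k_\zeta^u\in K_u$, that $u$, $u'$, $u''$ all exist at $\zeta$ in the strong sense used above, and that the reproducing identity extends to the boundary point $\zeta$. All three are direct consequences of $\zeta \in \mathbb{T}\setminus\sigma(u)$, which forces $u$ to be analytic across $\zeta$; these facts should be cited from the preliminary section. Everything else is a short computation, so I do not anticipate any substantive difficulty.
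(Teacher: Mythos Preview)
Your argument is correct and shares the paper's core idea: test $\text{Q}_\zeta^u$ against the boundary kernel $k_\zeta^u$, compute $(k_\zeta^u)'(\zeta)=\tfrac12\zeta\,\overline{u(\zeta)}\,u''(\zeta)$, and use $\|k_\zeta^u\|^2=|u'(\zeta)|$. The difference is in how the derivative value is converted into a lower bound. The paper first establishes a separate lemma (Lemma~\ref{L:boundaryderiv}) identifying $\|\mathfrak{D}_\zeta\|_{(K_u)^*}$ with $\|\text{Q}_\zeta^u k_\zeta^u\|_{H^2}$ via a weak-limit and conjugation argument, and then bounds $\|\text{Q}_\zeta^u k_\zeta^u\|$ from below by $|(k_\zeta^u)'(\zeta)|/\|k_\zeta^u\|$. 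You collapse both steps into a single line: $|(k_\zeta^u)'(\zeta)|=|\langle \text{Q}_\zeta^u k_\zeta^u, k_\zeta^u\rangle|\le \|\text{Q}_\zeta^u\|\,\|k_\zeta^u\|^2$, using only the reproducing property and Cauchy--Schwarz. This is a genuine streamlining for the purposes of Theorem~\ref{T:lower-estimate}; the paper's lemma is not wasted, however, since the identity $\|\mathfrak{D}_\zeta\|_{(K_u)^*}=\|\text{Q}_\zeta^u k_\zeta^u\|_{H^2}$ (and its consequence \eqref{L:boundaryderivnorm}) is reused in the proof of Theorem~\ref{T:upper-estimate}.
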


The above two results are for arbitrary inner functions. By restricting to the special class of \emph{one-component} inner functions, an important subclass of inner functions, we obtain further qualitative results. In the following, we provide an upper bound for $\|\text{Q}_\zeta^u\|$ when $u$ belongs to this class.

\begin{theorem}\label{T:upper-estimate}
Let $u$ be a one-component inner function, and let $\zeta\in\mathbb{T}\setminus\sigma(u)$. Then
\begin{equation}\label{E:upper-estimate}
\|\text{Q}_\zeta^u \| \leq C_u |u'(\zeta)|,
\end{equation}
where $C_u$ is a positive quantity not depending on $\zeta$.
\end{theorem}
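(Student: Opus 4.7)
My plan is to translate $\|Q_\zeta^u f\|^2$ into an explicit weighted $\ell^2$-sum via the Aleksandrov--Clark representation (which becomes particularly transparent when $u$ is one-component) and then estimate each summand using classical and one-component-specific tools.

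Since $\zeta\in\mathbb{T}\setminus\sigma(u)$, the boundary value $\alpha:=u(\zeta)\in\mathbb{T}$ is well defined, and every $g\in K_u$ extends analytically across $\mathbb{T}\setminus\sigma(u)$. For one-component $u$, a theorem of Aleksandrov asserts that the Clark measure $\sigma_\alpha$ is purely atomic with support $\{\xi_n\}_n=u^{-1}(\alpha)\cap\mathbb{T}$ and masses $1/|u'(\xi_n)|$, and the associated Clark unitary furnishes the norm identity $\|g\|_{K_u}^2=\sum_n |g(\xi_n)|^2/|u'(\xi_n)|$. Applying this to $g=Q_\zeta^u f\in K_u$, setting $\xi_0:=\zeta$ and using that $Q_\zeta^u f$ has a removable singularity at $\zeta$ with value $f'(\zeta)$, gives
\[
\|Q_\zeta^u f\|_{K_u}^2 \;=\; \frac{|f'(\zeta)|^2}{|u'(\zeta)|} \;+\; \sum_{n\neq 0}\frac{|f(\xi_n)-f(\zeta)|^2}{|\xi_n-\zeta|^2\,|u'(\xi_n)|}.
\]

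From here, three one-component-sensitive ingredients should finish the job: (a) the reproducing-kernel bound $|f(\zeta)|^2\le\|k_\zeta^u\|^2\|f\|^2=|u'(\zeta)|\,\|f\|^2$; (b) a pointwise Bernstein-type inequality \`a la Baranov, $|f'(\zeta)|\le C_u|u'(\zeta)|^{3/2}\|f\|$, valid for one-component inner functions; and (c) the separation and summation estimates for the Clark atoms
\[
\sup_{n\neq 0}\frac{1}{|\xi_n-\zeta|^2}\;\le\; C_u\,|u'(\zeta)|^2,\qquad \sum_{n\neq 0}\frac{1}{|\xi_n-\zeta|^2\,|u'(\xi_n)|}\;\le\; C_u\,|u'(\zeta)|,
\]
which reflect the quasi-uniform spacing of the atoms enforced by the one-component property. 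Ingredient (b) controls the first term by $C_u|u'(\zeta)|^2\|f\|^2$. For the sum, splitting $|f(\xi_n)-f(\zeta)|^2\le 2(|f(\xi_n)|^2+|f(\zeta)|^2)$ reduces matters to two pieces: the $|f(\xi_n)|^2$ piece is absorbed by the first bound of (c) together with the Clark norm identity, and the $|f(\zeta)|^2$ piece by (a) combined with the second bound of (c). Each contribution is at most $C_u^2|u'(\zeta)|^2\|f\|^2$, yielding the claim.

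The technical weight of the argument lies in the ingredients (b) and (c), which are genuinely one-component phenomena and require careful appeal to Aleksandrov--Baranov theory on the fine geometry of Clark measures (in particular, the quantitative equivalence $|u'(\xi)| \asymp |u'(\xi')|$ for consecutive atoms and the geometric decay of $|\xi_n-\zeta|^{-2}$ in $n$). Once those bounds are in hand, the remainder of the proof is essentially algebraic bookkeeping.
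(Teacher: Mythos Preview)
Your proposal is correct and follows essentially the same route as the paper: both arguments pass to the Clark basis at $\alpha=u(\zeta)$, both feed on Bessonov's two estimates (the neighbor-separation bound giving $\sup_{n\neq 0}|\xi_n-\zeta|^{-2}\le C_u|u'(\zeta)|^2$ and the summation bound $\sum_{n\neq 0}|\xi_n-\zeta|^{-2}|u'(\xi_n)|^{-1}\le C_u|u'(\zeta)|$), and both need a pointwise Bernstein bound $|f'(\zeta)|\le C_u|u'(\zeta)|^{3/2}\|f\|$.

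The organizational differences are worth recording. The paper first computes the full matrix of $Q_\zeta^u$ in the Clark basis (a separate lemma) and then splits $f$ as $\gamma_\ell\widetilde{k_\ell}+\sum_{i\neq\ell}\gamma_i\widetilde{k_i}$; you instead apply the Clark norm identity directly to $Q_\zeta^u f$ and split the resulting scalar sum, which is cleaner and bypasses the matrix computation entirely. The paper also derives ingredient~(b) internally: it identifies $\|\mathfrak{D}_\zeta\|_{(K_u)^*}=\|Q_\zeta^u k_\zeta^u\|$ and then bounds the latter by $C_u|u'(\zeta)|^{3/2}$ using Aleksandrov's criterion $|u''|\le C|u'|^2$ together with the second Bessonov estimate, whereas you cite it from Baranov. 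Both choices are legitimate; note that if you wanted to make your argument self-contained, the same two estimates in your ingredient~(c) plus Aleksandrov's $|u''|\le C|u'|^2$ already yield~(b) via the kernel $k_\zeta^u$, so no genuinely new input is required.
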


In fact, Theorem $\ref{T:upper-estimate}$ provides a characterization of the property of being one-component. Our final result directly addresses this characterization. Recall that, according to Aleksandrov's theorem (Proposition \ref{T:aleksandrov1}), $m(\sigma(u) \cap \mathbb{T})=0$ for any one-component inner function $u$.

\begin{theorem}\label{T:characterization}
Let $u$ be an inner function. Then $u$ is one-component if and only if $m(\sigma(u) \cap \mathbb{T})=0$ and  \eqref{E:upper-estimate} holds for every $\zeta \in \mathbb{T}\setminus \sigma(u)$.
\end{theorem}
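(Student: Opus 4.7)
The forward direction is essentially free once the preceding results are in hand. Assuming $u$ is one-component, Proposition \ref{T:aleksandrov1} (Aleksandrov's theorem cited in the excerpt) gives $m(\sigma(u)\cap\mathbb{T})=0$, and Theorem \ref{T:upper-estimate} supplies the inequality \eqref{E:upper-estimate} with a constant $C_u$ independent of $\zeta\in\mathbb{T}\setminus\sigma(u)$. So the work concentrates on the converse.

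For the reverse direction, my plan is to chain Theorem \ref{T:lower-estimate} with the hypothesized upper bound \eqref{E:upper-estimate} to extract a purely analytic condition on $u$, and then invoke a known characterization of one-component inner functions. Concretely, for every $\zeta\in\mathbb{T}\setminus\sigma(u)$, Theorem \ref{T:lower-estimate} yields
\begin{equation*}
\frac{|u''(\zeta)|}{2|u'(\zeta)|}\;\leq\;\|\text{Q}_\zeta^u\|\;\leq\;C_u\,|u'(\zeta)|,
\end{equation*}
so
\begin{equation*}
|u''(\zeta)|\;\leq\;2C_u\,|u'(\zeta)|^2 \qquad (\zeta\in\mathbb{T}\setminus\sigma(u)).
\end{equation*}
This is precisely the Aleksandrov-type second-derivative condition that, together with the assumption $m(\sigma(u)\cap\mathbb{T})=0$, is equivalent to one-componentness. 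The plan is therefore to appeal to the corresponding proposition in Section \ref{S:Preliminaries} (the same circle of Aleksandrov characterizations from which Proposition \ref{T:aleksandrov1} is drawn) to conclude that $u$ is one-component.

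The only real obstacle is marshaling the correct characterization from the preliminaries: one needs the statement that a non-trivial inner function $u$ with $m(\sigma(u)\cap\mathbb{T})=0$ is one-component if and only if $|u''(\zeta)|\lesssim|u'(\zeta)|^2$ on $\mathbb{T}\setminus\sigma(u)$. Provided this is available (and it appears to be the natural characterization the authors will have recorded, given how Theorems \ref{T:lower-estimate}--\ref{T:upper-estimate} are set up), the proof of Theorem \ref{T:characterization} reduces to the two-line computation above together with citations of Theorems \ref{T:lower-estimate}, \ref{T:upper-estimate}, and Proposition \ref{T:aleksandrov1}. No further estimates on $K_u$-valued quantities are required.
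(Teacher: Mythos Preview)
Your forward direction matches the paper's. For the converse, however, there is a genuine gap: the Aleksandrov characterization recorded as Proposition~\ref{T:aleksandrov1} has \emph{three} conditions, not two. In addition to $m(\sigma(u)\cap\mathbb{T})=0$ and the second-derivative bound $|u''(\zeta)|\leq C|u'(\zeta)|^2$, one must also verify condition~(ii): that $|u'|$ is unbounded on every open arc $\Delta\subset\mathbb{T}\setminus\sigma(u)$ whose closure meets $\sigma(u)$. Your chain $\tfrac{|u''(\zeta)|}{2|u'(\zeta)|}\leq\|\text{Q}_\zeta^u\|\leq C_u|u'(\zeta)|$ only yields~(iii), and the hoped-for ``characterization with just (i) and (iii)'' is not what the paper records.

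The paper closes this gap by also invoking the \emph{other} lower bound, namely \eqref{E:spectrumQest} from Theorem~\ref{T:spectrumQ}. Combining \eqref{E:spectrumQest} with the assumed upper bound \eqref{E:upper-estimate} gives
\[
|u'(\zeta)|\;\geq\;\frac{1}{C_u}\,\|\text{Q}_\zeta^u\|\;\geq\;\frac{1}{C_u\operatorname{dist}(\zeta,\sigma(u)\cap\mathbb{T})},
\]
which forces $|u'|$ to blow up along any arc approaching $\sigma(u)$, i.e.\ condition~(ii). With (i) assumed, (ii) obtained this way, and (iii) obtained via Theorem~\ref{T:lower-estimate} as you did, Proposition~\ref{T:aleksandrov1} then applies. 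So the fix is short: add the comparison with \eqref{E:spectrumQest} to secure condition~(ii).
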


More explicitly, Theorem \ref{T:characterization} says that $u$ is one-component if and only if $m(\sigma(u))=0$ and there exists a positive constant $C_u$, depending only on $u$, such that
\[
\int_{\mathbb{T}} \left|\frac{f(\lambda)-f(\zeta)}{\lambda - \zeta}\right|^2\, dm(\lambda) \leq C_u |u'(\zeta)|^2 \|f\|_{H^2}^2, \qquad f \in K_u.
\]

\section{Some profound results about model spaces} \label{S:Preliminaries}
In this section, we revisit some key properties of model spaces and introduce the difference quotient operator. These are foundational theorems in the theory; however, to avoid confusion with our own results, we refer to them as propositions. We begin by recalling a well-known result on angular derivatives \cite[Thm $7.24$]{MR3526203}.

\begin{proposition}\label{T:thm724model}
Let $u$ be an inner function and $\zeta\in\mathbb{T}$. Then the following are equivalent.
\begin{enumerate}[(i)]
\item The quantity
\[
c:=\liminf_{z\in\mathbb{D},z\to\zeta}\frac{1-|u(z)|}{1-|z|}
\]
is finite.
\item There exists $\lambda\in\mathbb{T}$ such that the function
\[
z\mapsto \frac{u(z)-\lambda}{z-\zeta}, \qquad z\in\mathbb{D},
\]
belongs to $K_u$.
\item Each $f\in K_u$ has non-tangential limit at $\zeta$.
\item The derivative $u'$ has non-tangential limit at $\zeta$.
\end{enumerate}
\end{proposition}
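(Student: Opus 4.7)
The plan is to establish the cycle $(i)\Rightarrow(ii)\Rightarrow(iii)\Rightarrow(i)$, together with the equivalence $(i)\Leftrightarrow(iv)$ obtained directly from the Julia--Carath\'eodory theorem applied to the inner function $u\colon\mathbb{D}\to\mathbb{D}$. That classical theorem gives, under condition $(i)$, that $u$ has a nontangential limit $\lambda$ at $\zeta$ with $|\lambda|=1$ and that $u'$ has a finite nontangential limit there, and conversely. Thus $(i)\Leftrightarrow(iv)$ is obtained for free, and the $\lambda$ to be used in $(ii)$ is forced to be this boundary value of $u$.

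For $(i)\Rightarrow(ii)$, I would introduce the natural candidate for the boundary reproducing kernel,
\[
k_\zeta^u(z):=\frac{1-\overline{\lambda}\,u(z)}{1-\overline{\zeta}\,z},\qquad z\in\mathbb{D},
\]
with $\lambda$ as above. The crucial ingredient is the norm identity $\|k_w^u\|^2=(1-|u(w)|^2)/(1-|w|^2)$ for the standard interior reproducing kernel $k_w^u(z)=(1-\overline{u(w)}\,u(z))/(1-\overline{w}\,z)$. Taking a sequence $z_n\to\zeta$ realising the finite $\liminf$ in $(i)$, the norms $\|k_{z_n}^u\|$ stay bounded; a weakly convergent subsequence must then have pointwise limit equal to $k_\zeta^u$ because $u(z_n)\to\lambda$, so $k_\zeta^u\in K_u$. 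Since $1-\overline{\zeta}\,z=-\overline{\zeta}(z-\zeta)$, the function $k_\zeta^u$ is a unimodular multiple of $(u(z)-\lambda)/(z-\zeta)$, which proves $(ii)$.

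For $(ii)\Rightarrow(iii)$, I would argue that the function from $(ii)$ acts as evaluation at $\zeta$: from the weak convergence $k_{r\zeta}^u\to k_\zeta^u$ as $r\to 1^-$ combined with $\langle f,k_{r\zeta}^u\rangle=f(r\zeta)$ for $f\in K_u\subset H^2$, one deduces that the radial limit $\lim_{r\to 1}f(r\zeta)=\langle f,k_\zeta^u\rangle$ exists for every $f\in K_u$, and a nontangential upgrade (using uniform boundedness of $\|k_w^u\|$ on Stolz angles) strengthens this to a nontangential limit. For $(iii)\Rightarrow(i)$, I would invoke Banach--Steinhaus with the family of radial evaluation functionals $L_r f:=f(r\zeta)$, each of which is bounded on $K_u$ with $\|L_r\|=\|k_{r\zeta}^u\|=((1-|u(r\zeta)|^2)/(1-r^2))^{1/2}$. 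Condition $(iii)$ gives pointwise convergence of $L_r f$ as $r\to 1^-$ for every $f\in K_u$, so uniform boundedness yields $\sup_r\|L_r\|<\infty$, which is precisely $(i)$ along the radial direction (and hence as a $\liminf$).

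The main obstacle I anticipate lies in the implication $(ii)\Rightarrow(iii)$, specifically in passing from the existence of an element of $K_u$ with the correct reproducing property to the existence of a \emph{nontangential} (not merely radial) limit for every $f\in K_u$. This usually demands either a controlled version of the weak-convergence argument for $k_w^u$ as $w\to\zeta$ inside a Stolz sector, or an appeal to a Fatou-type theorem tailored to the functional $\langle\,\cdot\,,k_\zeta^u\rangle$. The remaining steps reduce to routine applications of Julia--Carath\'eodory, reproducing-kernel calculus, and Banach--Steinhaus.
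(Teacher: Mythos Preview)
The paper does not prove this proposition at all; it is quoted as a known result with a reference to \cite[Thm~7.24]{MR3526203}, so there is no proof in the paper to compare your argument against.

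Your outline is essentially the standard textbook argument, and the implications $(i)\Leftrightarrow(iv)$ via Julia--Carath\'eodory, $(i)\Rightarrow(ii)$ via weak compactness of the kernels, and $(iii)\Rightarrow(i)$ via Banach--Steinhaus are all sound. The one genuine soft spot is exactly where you place it, in $(ii)\Rightarrow(iii)$: you invoke weak convergence $k_{r\zeta}^u\to k_\zeta^u$ and ``uniform boundedness of $\|k_w^u\|$ on Stolz angles,'' but from $(ii)$ alone you have not yet established that the norms $\|k_{r\zeta}^u\|$ stay bounded---that boundedness is precisely condition $(i)$, so as written the step is circular. The clean repair is to prove $(ii)\Rightarrow(i)$ directly and then run $(i)\Rightarrow(iii)$. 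From $k_\zeta^u\in K_u$ and the reproducing property one has $\langle k_\zeta^u,k_{r\zeta}^u\rangle=k_\zeta^u(r\zeta)=(1-\overline{\lambda}u(r\zeta))/(1-r)$, and Cauchy--Schwarz together with $|1-\overline{\lambda}u(r\zeta)|\ge 1-|u(r\zeta)|$ gives
\[
\frac{1-|u(r\zeta)|}{1-r}\le\|k_\zeta^u\|_{H^2}\left(\frac{1-|u(r\zeta)|^2}{1-r^2}\right)^{1/2},
\]
whence $(1-|u(r\zeta)|)/(1-r)\le 2\|k_\zeta^u\|_{H^2}^2$, which is $(i)$. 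With $(i)$ in hand the norms $\|k_w^u\|$ are uniformly bounded on Stolz sectors, your weak-convergence argument goes through, and $(iii)$ follows.
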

If the conditions of Proposition \ref{T:thm724model} are satisfied, further conclusions can be drawn about the inner function $u$. Specifically, we have $\lambda = u(\zeta)$ and $u'(\zeta)=\overline{\zeta}u(\zeta)|u'(\zeta)|$. Additionally, the \emph{boundary kernel}
\[
k_\zeta^u(z):=\frac{1-\overline{u(\zeta)}u(z)}{1-\overline{\zeta}z}
\]
belongs to $K_u$, and for each $f\in K_u$, 
\[
f(\zeta) = \langle f,k_\zeta^u \rangle.
\]
Finally, we have the relation
\[
c=\angle\lim_{z\to\zeta}\frac{1-|u(z)|}{1-|z|} = |u'(\zeta)| = k_\zeta^u(\zeta) = \|k_\zeta^u\|_{H^2}^2,
\]
where $\angle$ denotes the non-tangential limit.

We also require the following result, which establishes a connection between the boundary regularity of $u$ and the boundary regularity of the elements in $K_u$ \cite[Theorem 20.13]{MR3617311}.

\begin{proposition} \label{T:resolvent-u}
Let $\Delta$ be an open arc of $\mathbb{T}$. Then the following are equivalent.
\begin{enumerate}[(i)]
\item  $\Delta$ is contained in the resolvent of $S_u$, i.e., $\Delta \cap \sigma(S_u) = \emptyset$.
\item  $u$ has an analytic continuation across $\Delta$, with $|u|=1$ on $\Delta$.
\item  Every function in $K_u$ has an analytic continuation across $\Delta$.
\end{enumerate}
\end{proposition}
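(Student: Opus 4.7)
The plan is to verify the three equivalences by the cycle (i) $\Leftrightarrow$ (ii) and (ii) $\Leftrightarrow$ (iii), relying on the Liv\v{s}ic--M\"{o}ller identification $\sigma(S_u) = \sigma(u)$, the explicit formula for the reproducing kernel of $K_u$, and a pseudocontinuation argument.

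For (i) $\Leftrightarrow$ (ii): the Liv\v{s}ic--M\"{o}ller theorem recasts $\Delta \cap \sigma(S_u) = \emptyset$ as $\liminf_{z \to \zeta}|u(z)| > 0$ for every $\zeta \in \Delta$. On any compact sub-arc this upgrades, by compactness, to a uniform lower bound $|u| \geq c > 0$ on a one-sided neighbourhood $V \cap \mathbb{D}$, so $1/u$ is bounded holomorphic there. Together with $|u|=1$ $m$-almost everywhere on $\mathbb{T}$, the Schwarz reflection principle produces an analytic continuation of $u$ across $\Delta$, namely $u(z) = 1/\overline{u(1/\overline{z})}$ for $|z|>1$ in $V$, with $|u|=1$ on $\Delta$. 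The converse direction is immediate: analytic continuation with modulus one forces $\liminf|u|=1$ at every $\zeta \in \Delta$, so $\Delta \cap \sigma(u) = \emptyset$.

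For (iii) $\Rightarrow$ (ii): pick any $w \in \mathbb{D}$ with $u(w) \neq 0$ and consider $k^u_w \in K_u$, which by hypothesis extends analytically across $\Delta$. Solving
\begin{equation*}
k^u_w(z) = \frac{1 - \overline{u(w)}\,u(z)}{1 - \overline{w}z}
\end{equation*}
for $u(z)$, and noting that $1 - \overline{w}z$ is non-vanishing on a neighbourhood of $\mathbb{T}$ since $w \in \mathbb{D}$, gives an analytic extension of $u$ across $\Delta$. The condition $|u|=1$ on $\Delta$ then follows by continuity: $|u|=1$ $m$-a.e.\ on $\mathbb{T}$, the set $\{|u|=1\}$ is closed in the domain of extension, and hence contains all of $\Delta$.

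The main work lies in (ii) $\Rightarrow$ (iii). For $f \in K_u$, the orthogonality $f \perp uH^2$ yields some $g \in H^2$ with $f(\zeta) = u(\zeta)\,\overline{\zeta\,g(\zeta)}$ for $m$-a.e.\ $\zeta \in \mathbb{T}$. Setting $g^*(z) := \overline{g(1/\overline{z})}$ for $|z|>1$ and
\begin{equation*}
\widehat{f}(z) := u(z)\,z^{-1}\,g^*(z), \qquad z \in V \cap \{|z|>1\},
\end{equation*}
one obtains a meromorphic function in the exterior whose non-tangential boundary values on $\Delta$ coincide $m$-a.e.\ with those of $f$. Since $u$ is analytic on $V$ with $|u|=1$ on $\Delta$, Proposition~\ref{T:thm724model} guarantees $f$ has non-tangential limits at every point of $\Delta$, and the bound $|f(\zeta)| \leq \|f\|\,|u'(\zeta)|^{1/2}$ together with boundedness of $|u'|$ on compact sub-arcs shows $f$ is bounded near $\Delta$. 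The main obstacle is then to pass from $m$-a.e.\ agreement of non-tangential limits to a true analytic continuation; this is handled by a Morera/edge-of-the-wedge argument applied to small rectangles straddling $\Delta$, exploiting boundedness of $f$ and the identity $|g|=|f|$ on $\mathbb{T}$ to control $g^*$ near $\Delta$ and thereby glue $f$ and $\widehat{f}$ into one holomorphic function on a neighbourhood of $\Delta$.
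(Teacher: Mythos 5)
The paper does not prove this proposition at all: it is quoted verbatim from the literature (Theorem 20.13 of the cited reference), so there is no in-paper argument to compare against. Your proposal reconstructs the standard textbook proof of this classical result of Moeller, and its architecture is sound: Liv\v{s}ic--M\"{o}ller plus reflection for (i)$\Leftrightarrow$(ii), recovering $u$ from a single kernel $k_w^u$ for (iii)$\Rightarrow$(ii), and the pseudocontinuation $f = u\,\overline{\zeta g}$ glued across $\Delta$ for (ii)$\Rightarrow$(iii).

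Two points need tightening. First, your justification that $f$ is bounded \emph{inside} $\mathbb{D}$ near $\Delta$ is off: the estimate $|f(\zeta)| \leq \|f\|\,|u'(\zeta)|^{1/2}$ only controls boundary values on $\Delta$, not interior values. The correct route is $|f(w)| \leq \|f\|\,\|k_w^u\|_{H^2}$ with $\|k_w^u\|_{H^2}^2 = (1-|u(w)|^2)/(1-|w|^2)$; since $u$ is analytic and zero-free near a compact subarc with $|u|=1$ on it, $\log|u|$ is harmonic, vanishes on $\Delta$, and has locally bounded gradient, so $1-|u(w)|^2 \lesssim 1-|w|$ and the kernel norms are locally bounded. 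Second, the Morera/gluing lemma (two functions holomorphic and suitably bounded on opposite sides of $\Delta$ with $m$-a.e.\ coinciding non-tangential boundary values extend each other analytically) is the genuine crux, and you only assert it; moreover, your appeal to ``Schwarz reflection'' in (i)$\Rightarrow$(ii) quietly uses the same lemma, since a priori you only know $|u|=1$ $m$-a.e.\ on $\Delta$, not continuity of $|u|$ up to $\Delta$. That lemma is standard (Morera on rectangles straddling $\Delta$ plus dominated convergence), but in a complete write-up it should be stated and proved once and then invoked in both places.
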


Equipped with Proposition \ref{T:resolvent-u}, we are now ready to define the operator $\text{Q}_\zeta^u$ properly. Fix a point $\zeta\in\mathbb{T}\setminus\sigma(u)$. Since $\sigma(S_u) = \sigma(u)$, by Proposition \ref{T:resolvent-u}, the operator
\[
\zeta\text{I} - S_u=\zeta\text{I} - X^*_u
\]
is invertible in the algebra $B(K_u)$ of bounded linear operators on $K_u$. Thus, the operator $\text{I} - \zeta X_u = \zeta(\zeta I - S_u)^*$ is also invertible, and, in light of \eqref{E:relation-Q-S}, we define
\begin{equation}\label{E:def-Qu}
\text{Q}_\zeta^u := (\text{I}-\zeta X_u)^{-1} X_u.
\end{equation}
In this case, for all $f\in K_u$, we also have
\[
(\text{Q}_\zeta^u f)(z) = \frac{f(z)-f(\zeta)}{z-\zeta}, \qquad z\in \mathbb{D},
\]
which is consistent with \eqref{E:def-Qw}.

An inner function $u$ is one-component if there exists an $\varepsilon \in (0,1)$ such that the sublevel set
\[
\Omega_\varepsilon := \{ z \in \mathbb{D} : |u(z)| < \varepsilon \}
\]
is connected. Other equivalent descriptions of one-component inner functions can be found in \cite{Aleksandrov2000, bessonov2015duality, nicolau2021}, and certain algebraic properties of this family have been explored in \cite{Cima2020, Reijonen_2019}. For a comprehensive survey, see \cite{Bellavita2022O}. In particular, the following characterization plays a crucial role in our work. Note that Proposition \ref{T:resolvent-u} is implicitly used in parts $(ii)$ and $(iii)$.

\begin{proposition}[Aleksandrov \cite{Aleksandrov2000}]\label{T:aleksandrov1}
An inner function $u$ is one-compo\-nent if and only if the following conditions hold.
\begin{enumerate}[(i)]
\item $m\big(\sigma(u) \cap \mathbb{T} \big)=0$.
\item $|u'|$ is unbounded on any open arc $\Delta\subset  \mathbb{T}\setminus \sigma(u)$ with $\overline{\Delta} \cap \sigma(u)\neq \emptyset$.
\item There exists a positive constant $C$ such that 
\[
|u''(\zeta)|\leq C |u'(\zeta)|^2, \qquad \zeta \in \mathbb{T} \setminus \sigma(u).
\]
\end{enumerate}
\end{proposition}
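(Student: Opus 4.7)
The plan is to prove both implications by leveraging the theorems already established in the paper, together with the Aleksandrov characterization in Proposition \ref{T:aleksandrov1}. The forward direction is essentially a repackaging of what is stated. If $u$ is one-component, then $m(\sigma(u)\cap\mathbb{T})=0$ is recorded in Proposition \ref{T:aleksandrov1}(i), and the estimate \eqref{E:upper-estimate} is exactly the content of Theorem \ref{T:upper-estimate}. So the nontrivial direction is the converse: assuming $m(\sigma(u)\cap\mathbb{T})=0$ and that \eqref{E:upper-estimate} holds for every $\zeta\in\mathbb{T}\setminus\sigma(u)$, conclude that $u$ is one-component.

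For the converse, the strategy is to verify conditions (i), (ii), and (iii) of Proposition \ref{T:aleksandrov1}. Condition (i) is a hypothesis. For condition (iii), I would combine the lower estimate \eqref{E:lower-estimate} of Theorem \ref{T:lower-estimate} with the upper estimate \eqref{E:upper-estimate}:
\[
\frac{|u''(\zeta)|}{2|u'(\zeta)|} \;\leq\; \|\text{Q}_\zeta^u\| \;\leq\; C_u|u'(\zeta)|, \qquad \zeta\in\mathbb{T}\setminus\sigma(u),
\]
which rearranges to $|u''(\zeta)|\leq 2C_u|u'(\zeta)|^2$, yielding (iii) with constant $2C_u$.

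For condition (ii), I would combine the lower estimate \eqref{E:spectrumQest} of Theorem \ref{T:spectrumQ} with \eqref{E:upper-estimate}:
\[
\frac{1}{\operatorname{dist}(\zeta,\sigma(u)\cap\mathbb{T})} \;\leq\; \|\text{Q}_\zeta^u\| \;\leq\; C_u|u'(\zeta)|,
\]
so that $|u'(\zeta)|\geq \bigl(C_u\operatorname{dist}(\zeta,\sigma(u)\cap\mathbb{T})\bigr)^{-1}$ for every $\zeta\in\mathbb{T}\setminus\sigma(u)$. Now let $\Delta\subset\mathbb{T}\setminus\sigma(u)$ be an open arc with $\overline{\Delta}\cap\sigma(u)\neq\emptyset$, and pick $\zeta_0\in\overline{\Delta}\cap\sigma(u)$. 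Choosing a sequence $\zeta_n\in\Delta$ with $\zeta_n\to\zeta_0$, we have $\operatorname{dist}(\zeta_n,\sigma(u)\cap\mathbb{T})\to 0$, and hence $|u'(\zeta_n)|\to\infty$. This shows $|u'|$ is unbounded on $\Delta$, which is exactly (ii). With all three conditions verified, Proposition \ref{T:aleksandrov1} implies that $u$ is one-component.

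There is no real obstacle here, since the theorem is essentially an assembly statement: the two lower bounds \eqref{E:lower-estimate} and \eqref{E:spectrumQest}, paired with the hypothesized upper bound \eqref{E:upper-estimate}, recover the quantitative Aleksandrov conditions (iii) and (ii) respectively. The only mildly delicate point is to make sure the constant $C_u$ in \eqref{E:upper-estimate} is independent of $\zeta$ (which is part of the hypothesis) so that the resulting constant in condition (iii) is genuinely uniform, and that the arc condition in (ii) is interpreted correctly (unboundedness on any such $\Delta$, not just on all of $\mathbb{T}\setminus\sigma(u)$).
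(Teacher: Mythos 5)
There is a fundamental mismatch here: the statement you were asked to prove is Proposition \ref{T:aleksandrov1} itself, i.e.\ Aleksandrov's characterization of one-component inner functions, which the paper quotes from \cite{Aleksandrov2000} without proof. What you have written is instead a proof of Theorem \ref{T:characterization} (and it essentially coincides with the paper's Section \ref{S:Example0}): your converse direction assumes $m(\sigma(u)\cap\mathbb{T})=0$ together with \eqref{E:upper-estimate}, which are the hypotheses of Theorem \ref{T:characterization}, not the conditions (i)--(iii) of the proposition. Worse, the argument is circular as a proof of the stated proposition: in the forward direction you extract $m(\sigma(u)\cap\mathbb{T})=0$ from ``Proposition \ref{T:aleksandrov1}(i)'', and in the converse direction you close by declaring ``Proposition \ref{T:aleksandrov1} implies that $u$ is one-component.'' You are invoking the statement at both ends of the proof of that same statement.

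The circularity cannot be repaired by rearranging the ingredients, because the tools you lean on are themselves downstream of Aleksandrov's theorem: the paper's proof of Theorem \ref{T:upper-estimate} uses Proposition \ref{T:aleksandrov1}(ii)--(iii) explicitly (see \eqref{EQ:2der}) as well as Bessonov's Proposition \ref{T:Bessonov}, which is also a consequence of Aleksandrov's work. A genuine proof of Proposition \ref{T:aleksandrov1} must engage with the actual definition of one-componentness --- the connectivity of a sublevel set $\Omega_\varepsilon=\{z\in\mathbb{D}:|u(z)|<\varepsilon\}$ --- and relate that geometric property to the boundary behaviour of $u'$ and $u''$; this is a substantial piece of function theory (estimates on $|u|$ and $|u'|$ near $\mathbb{T}\setminus\sigma(u)$, the structure of the components of $\Omega_\varepsilon$, etc.) of which nothing appears in your proposal. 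If the intended target was Theorem \ref{T:characterization}, then your argument is correct and matches the paper's; as a proof of Proposition \ref{T:aleksandrov1} it does not get off the ground.
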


Using Aleksandrov's result, Bessonov characterized the Clark measures of the one-component inner functions \cite[Theorem $1$]{bessonov2015duality}. Given an inner function $u$ and $\alpha \in \mathbb{T}$, the Clark measure $\sigma_\alpha$ is the unique finite, positive, Borel measure on $\mathbb{T}$ such that
\[
\frac{1-|u(z)|^2}{|\alpha-u(z)|^2} = \int_{\mathbb{T}} \frac{1-|\zeta|^2}{|z-\zeta|^2} \, d\sigma_\alpha(\zeta), \qquad z \in \mathbb{D}.
\]
This topic is thoroughly discussed in \cite{Ross2013LENSLO, saksman2007}. Adopting Bessonov's notation, for every Borel measure $\mu$ on the unit circle $\mathbb{T}$, we denote the set of the isolated atoms of $\mu$ by $a(\mu)$ and define $\rho(\mu) := \text{supp}(\mu) \setminus a(\mu)$. We say that an atom $\zeta \in a(\mu)$ has two neighbors if there is an open arc $(\zeta_-, \zeta_+)$ of $\mathbb{T}$ with endpoints $\zeta_\pm \in a(\mu)$ such that $\zeta$ is the only point in $(\zeta_-, \zeta_+) \cap \text{supp}(\mu)$.

\begin{proposition}[Bessonov \cite{{bessonov2015duality}}]\label{T:Bessonov}
Let $u$ be a one-component inner function, and let $\alpha\in\mathbb{T}$. Then the associated Clark measure $\sigma_\alpha$ is a discrete measure on $\mathbb{T}$ with isolated atoms such that $m(\text{supp}(\sigma_\alpha)) = 0$. Moreover, every open arc $\Delta \subset \mathbb{T}\setminus \sigma(u)$ contains infinitely many atoms of $a(\sigma_\alpha)$ and every atom $\zeta \in a(\sigma_\alpha)$ has two neighbors $\zeta_\pm \in a(\sigma_\alpha)$ such that
\begin{equation}\label{T:BessonovEq1}
A_u |\zeta-\zeta_\pm|\leq \sigma_\alpha(\{\zeta\})\leq B_u |\zeta-\zeta_\pm|
\end{equation}
for some positive constants $A_u,B_u$ depending only on $u$.
\end{proposition}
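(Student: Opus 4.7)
The plan is to split the biconditional into its two directions and, for each, appeal to Aleksandrov's characterization (Proposition \ref{T:aleksandrov1}) as the organizing principle. The forward implication is essentially already done: if $u$ is one-component, then Proposition \ref{T:aleksandrov1}(i) gives $m(\sigma(u)\cap\mathbb{T})=0$, while \eqref{E:upper-estimate} is exactly the content of Theorem \ref{T:upper-estimate}. So this direction requires only a one-line citation.

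The interesting direction is the converse. Assume $m(\sigma(u)\cap\mathbb{T})=0$ and that there exists $C_u>0$ with $\|\text{Q}_\zeta^u\|\leq C_u|u'(\zeta)|$ for every $\zeta\in\mathbb{T}\setminus\sigma(u)$. The strategy is to verify each of the three conditions (i)--(iii) in Proposition \ref{T:aleksandrov1}, from which one-componentness follows. Condition (i) is given. For condition (iii), sandwich the norm of $\text{Q}_\zeta^u$ between Theorem \ref{T:lower-estimate} and the hypothesis to obtain
\[
\frac{|u''(\zeta)|}{2|u'(\zeta)|}\;\leq\;\|\text{Q}_\zeta^u\|\;\leq\;C_u|u'(\zeta)|,
\]
so $|u''(\zeta)|\leq 2C_u|u'(\zeta)|^2$ on $\mathbb{T}\setminus\sigma(u)$, which is exactly (iii) with constant $2C_u$.

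For condition (ii), combine the distance estimate \eqref{E:spectrumQest} from Theorem \ref{T:spectrumQ} with the upper estimate to get
\[
|u'(\zeta)|\;\geq\;\frac{1}{C_u\,\operatorname{dist}(\zeta,\sigma(u)\cap\mathbb{T})},\qquad \zeta\in\mathbb{T}\setminus\sigma(u).
\]
Given an open arc $\Delta\subset\mathbb{T}\setminus\sigma(u)$ with $\overline{\Delta}\cap\sigma(u)\neq\emptyset$, I would fix a boundary point $\eta\in\overline{\Delta}\cap\sigma(u)$ and pick a sequence $\zeta_n\in\Delta$ with $\zeta_n\to\eta$; then $\operatorname{dist}(\zeta_n,\sigma(u)\cap\mathbb{T})\to 0$, forcing $|u'(\zeta_n)|\to\infty$, which establishes (ii).

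There is no serious obstacle: both bounds needed to complete the sandwich are already packaged as theorems earlier in the paper, and the argument is really just a bookkeeping exercise matching the hypotheses to Aleksandrov's three conditions. The main conceptual point worth emphasizing is that the single upper estimate \eqref{E:upper-estimate}, by virtue of being paired with the two independent lower bounds \eqref{E:spectrumQest} and \eqref{E:lower-estimate}, simultaneously enforces both the analytic growth condition (iii) and the unboundedness condition (ii) of Proposition \ref{T:aleksandrov1}, which is why together with the measure-zero hypothesis it suffices to characterize one-componentness.
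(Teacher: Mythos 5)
Your proposal does not prove the statement you were asked to prove. The statement is Bessonov's description of the Clark measures $\sigma_\alpha$ of a one-component inner function: that $\sigma_\alpha$ is purely atomic with isolated atoms, that $m(\operatorname{supp}\sigma_\alpha)=0$, that every open arc of $\mathbb{T}\setminus\sigma(u)$ contains infinitely many atoms, and that each atom's mass is comparable, with constants depending only on $u$, to its distance to its neighbouring atoms. Your argument never mentions $\sigma_\alpha$, its atoms, or any of these structural claims. What you have written is instead a proof of Theorem \ref{T:characterization} (the ``if and only if'' characterization of one-componentness via the upper estimate \eqref{E:upper-estimate}), and indeed that argument is essentially the one the paper gives in Section \ref{S:Example0}. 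But Theorem \ref{T:characterization} and Proposition \ref{T:Bessonov} are different statements; the former is not even a statement about Clark measures. The tell is your opening sentence, ``the plan is to split the biconditional into its two directions'': Proposition \ref{T:Bessonov} is not a biconditional.

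Beyond the mismatch, the argument cannot be salvaged by rerouting, because the logical order runs the other way: Proposition \ref{T:Bessonov} (in particular the two-sided estimate \eqref{T:BessonovEq1} and the bound $\int_{\mathbb{T}\setminus\{\zeta\}}|\lambda-\zeta|^{-2}\,d\sigma_\alpha(\lambda)\leq C/\sigma_\alpha(\{\zeta\})$ from Bessonov's paper) is an input to the proof of Theorem \ref{T:upper-estimate}, which is in turn an input to Theorem \ref{T:characterization}. Deducing Bessonov's proposition from results that depend on it would be circular. An actual proof requires different machinery — the relation $\sigma_\alpha(\{\zeta_n\})=1/|u'(\zeta_n)|$ from \eqref{E:formule-uprime} together with Aleksandrov's conditions (Proposition \ref{T:aleksandrov1}), especially the derivative bound $|u''|\leq C|u'|^2$, which is what controls how fast $|u'|$ can vary between consecutive atoms and yields the comparability of $\sigma_\alpha(\{\zeta\})$ with the gaps $|\zeta-\zeta_\pm|$. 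The paper itself does not reprove this; it cites \cite{bessonov2015duality}.
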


Using Bessonov's description, one can explicitly characterize the Clark measures $\sigma_\alpha$ associated with one-component inner functions. Specifically, $\sigma_\alpha$ is a purely atomic measure given by
\[
\sigma_\alpha = \sum_{n\in \mathbb N} \sigma_\alpha(\{\zeta_n\}) \, \delta_{\zeta_n},
\]
where the boundary points $\zeta_n$ anchoring the atoms satisfy \[\angle\lim_{z \to \zeta_n} u(z) = \alpha\] and
\begin{equation}\label{E:formule-uprime}
u'(\zeta_n) = \angle\lim_{z\to \zeta_n }\frac{u(z)-u(\zeta_n)}{z-\zeta_n} = \frac{\alpha \overline{\zeta_n}}{\sigma_\alpha(\{\zeta_n\})}.
\end{equation}
In particular, $\sigma_\alpha(\{\zeta_n\}) = \frac{1}{|u'(\zeta_n)|}$. For each point $\zeta_n$, we consider the associated normalized boundary kernel introduced in Proposition \ref{T:thm724model},
\[
\widetilde{k_n}(z) := \frac{k_{\zeta_n}^u(z)}{\|k_{\zeta_n}^u\|_{H^2}} = \frac{1}{\|k_{\zeta_n}^u\|_{H^2}} \frac{1-\overline{u(\zeta_n)}u(z)}{1-\overline{\zeta_n}z}, \qquad z \in \mathbb{D}.
\]
These boundary kernels form an orthonormal basis for the model space $K_u$. The model spaces associated with one-component inner functions have been extensively studied \cite{BARANOV2005116, Volberg1998}.

\section{Spectrum of $\|\text{Q}_\zeta^u\|$} \label{S:Lowerestimates}
In this section, we describe the spectrum of the operator $\text{Q}_\zeta^u$ and provide lower estimates for the norm $\|\text{Q}_\zeta^u\|$ when $\zeta \in \mathbb{T} \setminus \sigma(u)$. We begin by proving Theorem \ref{T:spectrumQ}.

\begin{proof}[Proof of Theorem \ref{T:spectrumQ}]
Let $\lambda \in \mathbb{C}$. According to \eqref{E:def-Qu},
\[
\lambda \text{I} - \text{Q}_\zeta^u = (\text{I} - \zeta X_u)^{-1}[\lambda \text{I} - (\lambda \zeta + 1) X_u].
\]
This shows that for $\lambda = -\overline{\zeta}$, the operator $\lambda \text{I} - \text{Q}_\zeta^u$ is invertible. More\-over, for $\lambda \neq -\overline{\zeta}$, it holds that
\begin{equation}\label{E:spectraQ}
\lambda \text{I} - \text{Q}_\zeta^u = (\lambda \zeta + 1)(\text{I} - \zeta X_u)^{-1}\left(\frac{\lambda}{1 + \zeta \lambda} \text{I} - X_u\right).
\end{equation}
It would be more natural to work with the operator $X_u$. However, since we want the spectrum $\sigma(u) = \sigma(S_u) = \sigma(X_u^\ast)$ to appear, we write
\[
\lambda \text{I} - \text{Q}_\zeta^u = (\lambda \zeta + 1)(\text{I} - \zeta X_u)^{-1}\left(\frac{\overline{\lambda}}{1 + \overline{\zeta} \overline{\lambda}} \text{I} - S_u\right)^\ast.
\]
Hence,
\[
\sigma(\text{Q}_\zeta^u) = \left\{ \lambda \in \mathbb{C} \setminus \{-\overline{\zeta}\} \colon \frac{\overline{\lambda}}{1 + \overline{\zeta} \overline{\lambda}} \in \sigma(u) \right\}.
\]
One can easily check that
\[
\frac{\overline{\lambda}}{1 + \overline{\zeta} \overline{\lambda}} = \eta \iff \overline{\lambda} = \frac{\eta}{1 - \overline{\zeta} \eta},
\]
and thus the identity \eqref{E:spectrumQ} follows.

The lower estimate \eqref{E:spectrumQest} stems from the basic fact that each point in the spectrum of an operator has modulus less than or equal to the operator norm. In particular, for every $\eta \in \sigma(u) \cap \mathbb{T}$, we have that
\[
\|\text{Q}_\zeta^u\| \geq \left| \frac{\overline{\eta}}{1-\zeta \overline{\eta}} \right| = \frac{1}{|\zeta - \eta|},
\]
and we conclude by taking the supremum with respect to $\eta$. Notice that if the intersection $\sigma(u) \cap \mathbb{T}$ were empty, then $\operatorname{dist}(\zeta, \sigma(u) \cap \mathbb{T}) = \inf \emptyset = +\infty$, making the statement trivial.

For the rest of the proof, notice that by assumption $\zeta \notin \sigma(u)$, therefore the denominators appearing in \eqref{E:spectraQz1} and \eqref{E:spectraQz2} do not vanish. From \eqref{E:spectraQ}, we have the identity
\[
\text{Ker}(\lambda \text{I} - \text{Q}_\zeta^u) = \text{Ker}\left(\frac{\lambda}{1+\zeta\lambda} \text{I} - X_u\right),
\]
and thus \eqref{E:spectraQz1} follows directly from \eqref{E:spectraXu1}.

Additionally, by \eqref{E:spectraQ}, we obtain
\[
\text{Ker}\left((\lambda \text{I} - \text{Q}_\zeta^u)^*\right) = \left(\text{I} - \zeta X_u\right)^* \text{Ker}\left(\left(\frac{\lambda}{1+\zeta\lambda} \text{I} - X_u\right)^*\right),
\]
and
\[
\text{Ran}\left((\lambda \text{I} - \text{Q}_\zeta^u)^*\right) = \left(\text{I} - \zeta X_u\right)^{-1} \text{Ran}\left(\left(\frac{\lambda}{1+\zeta\lambda} \text{I} - X_u\right)^*\right).
\]
Therefore, \eqref{E:spectraQz2} also follows from the invertibility of $\left(\text{I} - \zeta X_u\right)$ and \eqref{E:spectraXu2}.
\end{proof}

Theorem \ref{T:spectrumQ} shows in a quantitative way that when a point $\zeta \in \mathbb{T} \setminus \sigma(u)$ approaches the spectrum of $u$ on an arc of the boundary $\mathbb{T}$, the operator norm $\|\text{Q}_\zeta^u\|$ becomes unbounded on that arc.

\section{A lower estimate for $\|\text{Q}_\zeta^u\|$} \label{S:Lowerestimates2}

The following lemma describes the pointwise derivative $f'(\zeta)$ as a bounded linear functional on $K_u$.

\begin{lemma}\label{L:boundaryderiv}
Let $u$ be an inner function, and let $\zeta \in \mathbb{T}\setminus \sigma(u)$. Then, for every $f\in K_u$, the derivative $f'(\zeta)$ exists and the linear mapping
\[
\begin{array}{cccc}
\mathfrak{D}_\zeta: & K_u & \longrightarrow & \mathbb{C}\\
& f & \longmapsto & f'(\zeta)
\end{array}
\]
is a bounded linear functional on $K_u$ with norm
\begin{equation}\label{L:boundaryderivnorm1}
\|\mathfrak{D}_\zeta\|_{(K_u)^*} = \|\text{Q}_\zeta^u k_\zeta^u\|_{H^2}.
\end{equation}
\end{lemma}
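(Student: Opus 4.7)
The plan is to represent $\mathfrak{D}_\zeta$ via the boundary reproducing kernel at $\zeta$ and then exploit the canonical antilinear conjugation on $K_u$ to trade an adjoint for $\text{Q}_\zeta^u$ itself. Since $\zeta \in \mathbb{T} \setminus \sigma(u)$, Proposition \ref{T:resolvent-u} supplies an open arc containing $\zeta$ across which every $f \in K_u$ extends analytically; in particular $f'(\zeta)$ exists as a classical complex derivative, and the difference quotient $(\text{Q}_\zeta^u f)(z)$ continues through $z = \zeta$ with value $f'(\zeta)$. Applying the reproducing-kernel identity from Proposition \ref{T:thm724model} to $\text{Q}_\zeta^u f \in K_u$,
\[
\mathfrak{D}_\zeta f \;=\; f'(\zeta) \;=\; (\text{Q}_\zeta^u f)(\zeta) \;=\; \langle \text{Q}_\zeta^u f,\, k_\zeta^u \rangle \;=\; \langle f,\, (\text{Q}_\zeta^u)^* k_\zeta^u \rangle,
\]
so $\mathfrak{D}_\zeta$ is a bounded linear functional with Riesz representer $(\text{Q}_\zeta^u)^* k_\zeta^u$, whence $\|\mathfrak{D}_\zeta\|_{(K_u)^*} = \|(\text{Q}_\zeta^u)^* k_\zeta^u\|_{H^2}$.

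To conclude I need to exchange the adjoint for $\text{Q}_\zeta^u$ inside the norm. Introduce the standard antilinear isometric involution $C : K_u \to K_u$ defined on the unit circle by $(Cf)(\eta) := \bar\eta\, u(\eta)\, \overline{f(\eta)}$. A classical fact is that $X_u$ is $C$-symmetric, i.e., $C X_u C = X_u^* = S_u$; since $C$ is conjugate-linear, $C T^{-1} C = (C T C)^{-1}$ for every invertible $T$, and applying this with $T = \text{I} - \zeta X_u$ together with the formula \eqref{E:def-Qu} yields $C \text{Q}_\zeta^u C = (\text{Q}_\zeta^u)^*$. A direct boundary computation using $|\zeta| = |u(\zeta)| = 1$ gives $C k_\zeta^u = \bar\zeta\, u(\zeta)\, k_\zeta^u$, so $k_\zeta^u$ is a $C$-eigenvector with unit-modulus eigenvalue. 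Consequently,
\[
\|(\text{Q}_\zeta^u)^* k_\zeta^u\|_{H^2} \;=\; \|C \text{Q}_\zeta^u C k_\zeta^u\|_{H^2} \;=\; \|\text{Q}_\zeta^u (C k_\zeta^u)\|_{H^2} \;=\; \|\text{Q}_\zeta^u k_\zeta^u\|_{H^2},
\]
which is exactly \eqref{L:boundaryderivnorm1}.

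The main obstacle is this final norm-swap: for a generic Hilbert-space operator $\|T v\|$ and $\|T^* v\|$ need not coincide, so the equality genuinely rests on two structural features of model spaces, namely the $C$-symmetry of the compressed shift (which propagates through \eqref{E:def-Qu} to $\text{Q}_\zeta^u$) and the fact that the boundary reproducing kernel $k_\zeta^u$ is a $C$-eigenvector with unimodular eigenvalue. Both are short but nontrivial standard verifications; once they are in place, the rest of the lemma is essentially the Riesz representation theorem applied to the evaluation functional $f \mapsto (\text{Q}_\zeta^u f)(\zeta)$.
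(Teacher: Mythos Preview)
Your proof is correct, and it follows a cleaner, more structural path than the paper's. The paper first constructs the Riesz representer $\psi_\zeta$ of $\mathfrak{D}_\zeta$ by applying the uniform boundedness principle to the difference quotients $(\overline{w}-\overline{\zeta})^{-1}(k_w^u-k_\zeta^u)$ and passing to a weak limit, then computes $\psi_\zeta(z)=\overline{(k_z^u)'(\zeta)}$ explicitly by differentiating the kernel, and finally performs a boundary manipulation to show that $\psi_\zeta(z)=\overline{z}\,\zeta\,\overline{u(\zeta)}\,u(z)\,\overline{(\text{Q}_\zeta^u k_\zeta^u)(z)}$ for a.e.\ $z\in\mathbb{T}$, from which the norm identity is read off. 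You bypass all of this: since $\text{Q}_\zeta^u$ is already known to be bounded, the representer is immediately $(\text{Q}_\zeta^u)^* k_\zeta^u$, and the norm swap is handled abstractly via the $C$-symmetry of $X_u$ together with the fact that $k_\zeta^u$ is a unimodular $C$-eigenvector. The two arguments ultimately prove the same identity (indeed your computation yields $(\text{Q}_\zeta^u)^* k_\zeta^u=\zeta\,\overline{u(\zeta)}\,C(\text{Q}_\zeta^u k_\zeta^u)$, which is exactly the paper's boundary formula), but yours isolates the mechanism: it works for any $C$-symmetric operator applied to a $C$-eigenvector. The price is that the conjugation $C$ is not introduced anywhere in the paper, so from the paper's standpoint your argument imports an external tool, whereas the paper's computation is entirely self-contained.
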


\begin{proof}
Since $\zeta\in\mathbb{T}\setminus \sigma(u)$, there exists $\delta>0$ such that $u$ and every element in $K_u$ has an analytic extension in $B_\mathbb{C}(\zeta,\delta)$. For $w\in\mathbb{D}$ with $|w-\zeta|<\delta$ and for every $f\in K_u$, we have
\begin{eqnarray*}
\left| \bigg\langle f, \frac{k^u_{w}-k_\zeta^u}{\overline{w}-\overline{\zeta}}\bigg\rangle_{H^2} \right| &=& \left|\frac{f(w)-f(\zeta)}{w-\zeta}\right| \\
&=& \left|\frac{1}{w-\zeta} \int_{\zeta}^{w} f'(z) \,dz\right| \leq C_f < \infty.
\end{eqnarray*}
Hence, by the uniform boundedness principle, there exists a constant $C>0$ such that
\[\left\|  \frac{k^u_{w}-k_\zeta^u}{\overline{w}-\overline{\zeta}}\right\|_{H^2}\leq C,\qquad w\in\mathbb{D}\cap B_\mathbb{C}(\zeta,\delta).\]
We can then extract a sequence $(w_n)$ in $\mathbb{D}$ that tends to $\zeta$ such that the difference quotients $(\overline{w_n}-\overline{\zeta})^{-1}(k^u_{w_n}-k_\zeta^u)$ weakly converge to a function $\psi_\zeta\in K_u$. Hence, for every $f\in K_u$, one has
\[
\langle f,\psi_\zeta \rangle_{H^2} = \lim_n\bigg\langle f, \frac{k^u_{w_n}-k_\zeta^u}{\overline{w_n}-\overline{\zeta}} \bigg\rangle_{H^2} = \lim_n\frac{f(w_n)-f(\zeta)}{w_n-\zeta} = f'(\zeta).
\]
This proves that the operator $\mathfrak{D}_\zeta$ is an element of the dual $(K_u)^*$, and its norm $\|\mathfrak{D}_\zeta\|_{(K_u)^*}$ is equal to $\|\psi_\zeta\|_{H^2}$. 

To conclude the proof, it suffices to show that $\|\psi_\zeta\|_{H^2} = \|\text{Q}_\zeta^u k_\zeta^u\|_{H^2}$. Hence, for $z\in\mathbb{D}$,
\begin{align*}
\psi_\zeta(z) &= \langle \psi_\zeta, k_z^u\rangle_{H^2} = \lim_n\bigg\langle \frac{k^u_{w_n}-k_\zeta^u}{\overline{w_n}-\overline{\zeta}}, k_z^u \bigg\rangle_{H^2} \\
&= \lim_n\frac{\overline{k^u_{z}(w_n)}-\overline{k_z^u(\zeta)}}{\overline{w_n}-\overline{\zeta}} = \overline{\left(k_z^u\right)'(\zeta)}.
\end{align*}
Differentiating the kernel $k_z^u$ we obtain
\begin{eqnarray*}
\psi_\zeta(z) &=& \frac{-\overline{u'(\zeta)}u(z)(1-\overline{\zeta}z)+z\left(1-\overline{u(\zeta)}u(z)\right)}{(1-\overline{\zeta}z)^2}\\
&=& \frac{-\zeta\overline{u(\zeta)}u(z)k_{\zeta}^{u}(\zeta)+zk_{\zeta}^{u}(z)}{1-\overline{\zeta}z}, \qquad z \in \mathbb{D}.\end{eqnarray*}
But since, for $m$-a.e. $z\in\mathbb{T}$,
\[
\frac{\,\, k_{\zeta}^{u}(z) \,\,}{\overline{k_{\zeta}^{u}(z)}}  = \frac{\zeta \, \overline{u(\zeta)} }{z \, \overline{u(z)}},
\]
we have that $z k_{\zeta}^{u}(z) = \zeta  \overline{u(\zeta)} u(z) \overline{k_{\zeta}^{u}(z)}$, $m$-a.e. $z\in\mathbb{T}$, and thus the formula for $\psi_\zeta(z)$ simplifies to
\[
\psi_\zeta(z) = \overline{z}\zeta \overline{u(\zeta)} u(z) \overline{\big( Q_{\zeta}^{u} k_{\zeta}^{u} \big)(z)},
\]
for almost every $z \in \mathbb{T}$. Note the use of $k_\zeta^u(\zeta)=|u'(\zeta)|=u'(\zeta)\zeta\overline{u(\zeta)}$ from Proposition \ref{T:thm724model} in the above calculation. Therefore, $\|\psi_\zeta\|_{H^2} = \|\text{Q}_\zeta^u k_\zeta^u\|_{H^2}$.
\end{proof}

The above proof is adopted from \cite[Section 21.6]{MR3617311}. Note that, more explicitly, \eqref{L:boundaryderivnorm1} says
\begin{equation}\label{L:boundaryderivnorm}
|f'(\zeta)| \leq \|\text{Q}_\zeta^uk_\zeta^u\|_{H^2}\|f\|_{H^2}, \qquad f\in K_u,
\end{equation}
and the constant on the right side is sharp.

\begin{proof}[Proof of Theorem \ref{T:lower-estimate}]
From Lemma \ref{L:boundaryderiv} and the mere definition of norm on the dual space, we have 
\begin{equation}\label{E:test}
\|\text{Q}_\zeta^uk_\zeta^u\|_{H^2}^2 = \|\mathfrak{D}_\zeta\|_{(K_u)^*} = \sup_{\|f\|_{K_u}=1} |f'(\zeta)|^2.
\end{equation}
To obtain a lower bound, we apply \eqref{E:test} with $f = k_\zeta^u$. By direct computation, it follows that
\[
\big(k_\zeta^u\big)'(z) = \frac{\overline{u(\zeta)}}{\overline{\zeta}(\zeta-z)}\left[\frac{u(z)-u(\zeta)}{z-\zeta}-u'(z)\right],\qquad z\in\mathbb{D}.
\]
Expanding $u(z)$ and $u'(z)$ in Taylor series in a neighborhood of $\zeta$, it is easy to see that 
\begin{equation}\label{E:formule-kzprime}
(k_\zeta^u)'(\zeta) = \frac{1}{2} \zeta \, \overline{u(\zeta)} \, u''(\zeta).   
\end{equation}
In particular, \eqref{E:test} yields
\begin{align*}
\|\text{Q}_\zeta^u(k_\zeta^u)\|_{H^2}^2 \geq \frac{|(k_\zeta^u)'(\zeta)|^2}{\|k_\zeta^u\|_{H^2}^2}= \frac{|u''(\zeta)|^2}{4|u'(\zeta)|}.
\end{align*}
We conclude that
\[
\|\text{Q}_\zeta^u\|^2 \geq \frac{\|\text{Q}_\zeta^u(k_\zeta^u)\|_{H^2}^2}{\|k_\zeta^u\|_{H^2}^2} \geq \frac{1}{\|k_\zeta^u\|_{H^2}^2}\frac{|u''(\zeta)|^2}{4|u'(\zeta)|} = \frac{|u''(\zeta)|^2}{4|u'(\zeta)|^2}.
\]
\end{proof}

The following proposition provides more information on the quantity $|u''|/|u'|$ appearing in Theorem \ref{T:lower-estimate}.
\begin{proposition}\label{P:u''/u'}
Let $u$ be an inner function, and consider an open arc $\Delta\subseteq \mathbb{T} \setminus \sigma(u)$ such that $\overline{\Delta}\cap \sigma(u)\neq\emptyset$. If $|u'|$ is unbounded on $\Delta$, then so is $|u''|/|u'|$ on $\Delta$.
\end{proposition}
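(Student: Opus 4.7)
The plan is to reduce the problem to an elementary statement about a real-analytic function on a real interval by parametrising the arc. By Proposition \ref{T:resolvent-u}, the hypothesis $\Delta \subseteq \mathbb{T}\setminus\sigma(u)$ allows $u$ to be continued analytically across $\Delta$ with $|u|=1$ there. Writing $\zeta = e^{it}$ on $\Delta$, I would choose a real-analytic branch $\phi(t)$ such that $u(e^{it}) = e^{i\phi(t)}$ on the corresponding real interval $I$, thereby trading $u$ for the real-valued function $\phi$.

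The core computation is to differentiate $u(e^{it}) = e^{i\phi(t)}$ twice with respect to $t$ and compare with the chain-rule expressions for $u'(e^{it})$ and $u''(e^{it})$. This yields
\[
u'(e^{it}) = e^{i(\phi(t)-t)}\phi'(t), \qquad u''(e^{it}) = e^{i(\phi(t)-2t)}\bigl[\phi'(t)^2 - \phi'(t) - i\phi''(t)\bigr],
\]
so that $|u'(e^{it})| = \phi'(t)$ and $|u''(e^{it})| = |\phi'(t)^2 - \phi'(t) - i\phi''(t)|$. The sign $\phi'(t)>0$ is guaranteed by Proposition \ref{T:thm724model}, which gives $|u'(\zeta)| = \overline{\zeta}u(\zeta)u'(\zeta)>0$ on $\mathbb{T}\setminus\sigma(u)$; in particular $u'$ does not vanish on $\Delta$, so division by $\phi'$ is legitimate.

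The final step is the elementary inequality $|a - ib|\geq |a|$ for real $a,b$, which gives
\[
\frac{|u''(e^{it})|}{|u'(e^{it})|} \;\geq\; \frac{|\phi'(t)^2 - \phi'(t)|}{\phi'(t)} \;=\; |\phi'(t) - 1|.
\]
Since $|u'| = \phi'$ is assumed unbounded on $\Delta$, $|\phi'-1|$ is unbounded on $I$, and the conclusion follows.

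I do not expect a serious obstacle; the argument is entirely elementary once the parametrisation is set up. The only point requiring a little care is verifying $\phi' > 0$, which one needs to drop the absolute values on the denominator and to write $|\phi'(\phi'-1)|/\phi' = |\phi'-1|$, and this is taken care of cleanly by Proposition \ref{T:thm724model}.
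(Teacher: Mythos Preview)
Your argument is correct and takes a genuinely different route from the paper. The paper observes that $u''/u' = (\log u')'$, notes that $|u'|$ unbounded forces $|\log u'|$ unbounded (via the real part $\log|u'|$), and then invokes the fundamental theorem of calculus along the arc: if $(\log u')'$ were bounded, integrating would keep $\log u'$ bounded on the finite-length arc $\Delta$, a contradiction. Your approach instead parametrises the arc and writes $u(e^{it}) = e^{i\phi(t)}$, obtaining the explicit pointwise inequality
\[
\frac{|u''(\zeta)|}{|u'(\zeta)|} \;\geq\; \bigl||u'(\zeta)| - 1\bigr|, \qquad \zeta\in\Delta,
\]
from which the conclusion is immediate. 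Your route is slightly more computational but yields a quantitative lower bound that the paper's soft argument does not; the paper's argument, on the other hand, generalises verbatim to any holomorphic nonvanishing function on a neighbourhood of a bounded arc, without using that $|u|=1$ on $\Delta$. Both rely on Proposition~\ref{T:thm724model} (or the norm identity $|u'(\zeta)|=\|k_\zeta^u\|_{H^2}^2$) to guarantee $u'\neq 0$ on $\Delta$.
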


\begin{proof}
It easily follows from Proposition \ref{T:thm724model} that $u' \neq 0$ on $\Delta$: for every point $\zeta \in \Delta$, the modulus $|u'(\zeta)|$ is equal to the squared norm of the boundary kernel $k_\zeta^u$, which is not identically zero, since $u$, by assumption, cannot be constant. Furthermore, $u$ admits an analytic extension across $\Delta$. This implies that there exists an open set $\Omega \subset \mathbb{C}$ containing $\Delta$ on which $u$ is holomorphic and does not vanish. Without loss of generality, we can assume that $\Omega$ is simply connected, allowing the function $\log(u')$ to be well-defined and holomorphic on $\Omega$, a neighborhood of $\Delta$. Note that
\[
\frac{u''(z)}{u'(z)} = \big(\log(u')\big)'(z), \qquad z \in \Omega.
\]
Since $|u'|$ is unbounded on $\Delta$, it follows that $|\log(u')|$ is also unbounded. A standard argument using the fundamental theorem of calculus shows that $|\big(\log(u')\big)'|$ must also be unbounded on $\Delta$, concluding the proof.
\end{proof}

We note that by Proposition \ref{T:aleksandrov1}, if $u$ is a one-component inner function, then for every arc $\Delta$ as in Proposition \ref{P:u''/u'}, the quantity $|u''|/|u'|$ is indeed unbounded on $\Delta$.

\section{Upper estimates for $\|\text{Q}_\zeta^u\|$} \label{S:Upperestimates}
In what follows, we fix a one-component inner function $u$ and a point $\zeta \in \mathbb{T} \setminus \sigma(u)$. In this section, we prove Theorem \ref{T:upper-estimate}, which provides an upper estimate for the norm $\|\text{Q}_\zeta^u\|$. Since $\zeta$ is not part of the spectrum of $u$, the value $\alpha := u(\zeta) \in \mathbb{T}$ is well-defined, and we consider the corresponding Clark measure $ \sigma_\alpha $. By Proposition \ref{T:Bessonov}, we know that $\sigma_\alpha$ is a purely atomic measure. Denoting the atoms of $\sigma_\alpha$ by $(\zeta_n)_{n}$, it follows that the collection of normalized boundary kernels $\{\widetilde{k_n}\}_n$, defined as in \eqref{E:formule-uprime}, forms a complete orthonormal basis for the model space $K_u$. It is also essential to observe that our original point $\zeta \in \mathbb{T} \setminus \sigma(u)$ is necessarily an atom for $\sigma_\alpha$, as $\alpha = u(\zeta)$ by definition. The index $ \ell \in \mathbb{N} $ such that $\zeta_\ell = \zeta$ will play a significant role in the discussion below. In the following lemma, we compute the entries of the infinite matrix associated with $\text{Q}_\zeta^u$ with respect to the orthonormal basis $\{\widetilde{k_n}\}_n$.

\begin{lemma}\label{L:matrix}
Let $u$ be a one-component inner function, and let $\zeta\in\mathbb{T}\setminus\sigma(u)$. Let $[q_{ij}]$ be the infinite matrix representing the operator $\text{Q}_\zeta^u$ with respect to the orthonormal basis $\{\widetilde{k_n}\}_n$, i.e.,
\[
q_{ij} := \langle \text{Q}_\zeta^u \widetilde{k_j},\widetilde{k_i}\rangle_{H^2}, \qquad i,j \geq 1.
\]
Then the matrix $[q_{ij}]$ has the form
\[
\begin{pmatrix}
\frac{1}{\zeta_1-\zeta} & 0 & \cdots & 0 &\frac{\|k_\zeta^u\|}{\|k_{\zeta_1}^u\|} \frac{1}{\zeta-\zeta_1} & 0 & \cdots\\
0 & \frac{1}{\zeta_2-\zeta} & \cdots & 0 & \frac{\|k_\zeta^u\|}{\|k_{\zeta_2}^u\|} \frac{1}{\zeta-\zeta_2} & 0 & \cdots\\
\vdots & \vdots & \ddots & \vdots & \vdots &\vdots & \cdots \\
0 & 0 & \cdots & \frac{1}{\zeta_{\ell-1}-\zeta} & 0 & 0 & \ldots \\
\frac{(k_{\zeta_1}^u)' (\zeta)}{\|k_{\zeta_1}^u\|\|k_\zeta^u\|} & \frac{(k_{\zeta_2}^u)' (\zeta)}{\|k_{\zeta_2}^u\|\|k_\zeta^u\|} & \cdots & \frac{(k_{\zeta_{\ell-1}}^u)' (\zeta)}{\|k_{\zeta_{\ell-1}}^u\|\|k_\zeta^u\|} & \frac{(k_\zeta^u)'(\zeta)}{\|k_\zeta^u\|^2} & \frac{(k_{\zeta_{\ell+1}}^u)' (\zeta)}{\|k_{\zeta_{\ell+1}}^u\|\|k_\zeta^u\|} &\cdots \\
0 &0& \cdots & 0 & \frac{\|k_\zeta^u\|}{\|k_{\zeta_{\ell+1}}^u\|} \frac{1}{\zeta-\zeta_{\ell+1}} &  \frac{1}{\zeta_{\ell+1}-\zeta}  & \cdots\\
\vdots & \vdots & \vdots & \vdots &\vdots & \vdots & \ddots
\end{pmatrix}\!.
\]
\end{lemma}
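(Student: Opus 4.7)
The plan is to compute each entry
\[
q_{ij} \;=\; \frac{\bigl\langle Q_\zeta^u k_{\zeta_j}^u,\, k_{\zeta_i}^u \bigr\rangle_{H^2}}{\|k_{\zeta_j}^u\|_{H^2}\,\|k_{\zeta_i}^u\|_{H^2}}
\]
directly by invoking the reproducing-kernel property, which identifies the numerator with $\bigl(Q_\zeta^u k_{\zeta_j}^u\bigr)(\zeta_i)$. The orthonormality of the basis $\{\widetilde{k_n}\}_n$ immediately yields the key identity $k_{\zeta_j}^u(\zeta_i) = \delta_{ij}\|k_{\zeta_i}^u\|_{H^2}^{2}$, which will be the workhorse of the calculation. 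Since $\zeta=\zeta_\ell \notin \sigma(u)$, Proposition \ref{T:resolvent-u} guarantees that every function in $K_u$, and in particular the boundary kernels themselves, extends analytically across $\zeta$, so pointwise evaluation and differentiation at $\zeta$ are legitimate throughout.

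I would then split into two cases according to whether $j=\ell$. If $j\neq \ell$, the identity above gives $k_{\zeta_j}^u(\zeta)=0$, hence the difference quotient simplifies to
\[
\bigl(Q_\zeta^u k_{\zeta_j}^u\bigr)(z) \;=\; \frac{k_{\zeta_j}^u(z)}{z-\zeta}.
\]
Evaluating at $\zeta_i$ produces $\|k_{\zeta_j}^u\|_{H^2}^2/(\zeta_j-\zeta)$ when $i=j$, zero when $i\neq j$ and $i\neq \ell$, and the limit at $\zeta_\ell=\zeta$ gives the derivative $(k_{\zeta_j}^u)'(\zeta)$ for the row $\ell$ entry. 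If instead $j=\ell$, then $k_\zeta^u(\zeta)=\|k_\zeta^u\|_{H^2}^2$, and
\[
\bigl(Q_\zeta^u k_\zeta^u\bigr)(z) \;=\; \frac{k_\zeta^u(z) - \|k_\zeta^u\|_{H^2}^2}{z-\zeta}.
\]
Evaluating at $\zeta_i$ with $i\neq \ell$ uses $k_\zeta^u(\zeta_i)=0$ again to produce the column-$\ell$ entries $\|k_\zeta^u\|_{H^2}^2/(\zeta-\zeta_i)$, while the limit at $\zeta$ yields $(k_\zeta^u)'(\zeta)$ in the diagonal slot $(\ell,\ell)$.

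After dividing each quantity by the normalizing factor $\|k_{\zeta_i}^u\|_{H^2}\|k_{\zeta_j}^u\|_{H^2}$, every value lines up with the corresponding cell of the displayed matrix. The argument is essentially mechanical; the only subtle point is the justification that boundary kernels can be evaluated and differentiated at $\zeta$, which I regard as the main (but mild) obstacle and which is resolved in one line by the analytic-continuation statement of Proposition \ref{T:resolvent-u}.
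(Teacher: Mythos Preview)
Your proposal is correct and follows essentially the same approach as the paper: both arguments use the reproducing-kernel property to reduce $q_{ij}$ to a point evaluation of $Q_\zeta^u k_{\zeta_j}^u$, exploit the orthogonality relation $k_{\zeta_j}^u(\zeta_i)=\delta_{ij}\|k_{\zeta_i}^u\|_{H^2}^2$, and invoke analytic continuation (Proposition~\ref{T:resolvent-u}) to handle the evaluation at $\zeta$ via the derivative. The only cosmetic difference is that you organize the case split by the column index $j$ whereas the paper splits by the row index $i$, but the computations are identical.
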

We point out a necessary abuse of notation: in the matrix above, and only in this context, $\| \cdot\|$ represents the $H^2$-norm. 
\begin{proof}
Keep in mind that $\zeta_\ell = \zeta$, and assume $i\neq\ell$. For all $j's$, by the reproducing kernel property of $k_i$, we have
\begin{align*}
q_{ij} &= \frac{\big(\text{Q}_\zeta^u \widetilde{k_j}\big)(\zeta_i)}{\|k_{\zeta_i}^u\|_{H^2}} \\
&= \frac{1}{\|k_{\zeta_i}^u\|_{H^2}} \frac{\widetilde{k_j}(\zeta_i)-\widetilde{k_j}(\zeta)}{\zeta_i-\zeta} \\
&= \frac{1}{\|k_{\zeta_i}^u\|_{H^2}}\frac{1}{\|k_{\zeta_j}^u\|_{H^2}} \frac{\langle k_{\zeta_j}^u, k_{\zeta_i}^u\rangle_{H^2}-\langle k_{\zeta_j}^u, k_{\zeta_{\ell}}^u\rangle_{H^2}}{\zeta_i-\zeta}.
\end{align*}
We distinguish three cases and we use the orthogonality relations. If $j=i$, then $\langle k_{\zeta_j}^u, k_{\zeta_i}^u\rangle_{H^2}=\|k_{\zeta_i}^u\|_{H^2}^2$ and $\langle k_{\zeta_j}^u, k_{\zeta_\ell}^u\rangle_{H^2}=0$, so that
\[
i\neq \ell, \quad j=i \implies q_{ii}=\frac{1}{\zeta_i-\zeta}.
\]
If $j=\ell$, then $\langle k_{\zeta_j}^u, k_{\zeta_i}^u\rangle_{H^2}=0$ and $\langle k_{\zeta_j}^u, k_{\zeta_\ell}^u\rangle_{H^2}=\|k_{\zeta_\ell}^u\|_{H^2}^2$, so that
\[
i\neq \ell, \quad j=\ell \implies q_{i\ell}=-\frac{\|k_\zeta^u\|_{H^2}}{\|k_{\zeta_i}^u\|_{H^2}} \frac{1}{\zeta_i-\zeta}.
\]
If $j\neq \ell, j\neq i$, then both inner products involved equal $0$, and $q_{ij}=0$.

Now, we compute to the $\ell$-th row, i.e., $i=\ell$. This is more delicate because we are evaluating $\text{Q}_\zeta^u \widetilde{k_j}$, the difference quotient at $\zeta$, at the point $\zeta$. Since $\zeta\in\mathbb{T}\setminus\sigma(u)$, every function in the model space $K_u$ admits an analytic extension in a neighborhood of $\zeta$. In particular, for all $j's$,
we can write the Taylor series
\[
k_{\zeta_j}^u(z) = \sum_{m=0}^\infty \frac{\big(k_{\zeta_j}^u\big)^{(m)}(\zeta)}{m!}(z-\zeta)^m,
\]
where $\big(k_{\zeta_j}^u\big)^{(m)}$ is the $m^\text{th}$ derivative of $k_{\zeta_j}^u$ and $\big(k_{\zeta_j}^u\big)^{(0)} = k_{\zeta_j}^u(\zeta)$. Hence,
\begin{align*}
\big(\text{Q}_\zeta^u \, k_{\zeta_j}^u\big)(z) &= \frac{k_{\zeta_j}^u(z)-k_{\zeta_j}^u(\zeta)}{z-\zeta}\\
&=\frac{1}{z-\zeta}\bigg[ \sum_{m=0}^\infty \frac{\big(k_{\zeta_j}^u\big)^{(m)}(\zeta)}{m!}(z-\zeta)^m - k_{\zeta_j}^u(\zeta) \bigg] \\
&= \sum_{m=1}^\infty \frac{\big(k_{\zeta_j}^u\big)^{(m)}(\zeta)}{m!}(z-\zeta)^{m-1}.
\end{align*}
This implies $\big(\text{Q}_\zeta^u \, k_{\zeta_j}^u\big)(\zeta) = \big(k_{\zeta_j}^u\big)' (\zeta)$, and thus
\[ 
q_{\ell j} = \frac{\big(\text{Q}_\zeta^u \widetilde{k_j}\big)(\zeta_\ell)}{\|k_{\zeta_\ell}^u\|_{H^2}} = \frac{\big(\text{Q}_\zeta^u \, k_{\zeta_j}^u\big)(\zeta)}{\|k_{\zeta_j}^u\|_{H^2}\|k_{\zeta_\ell}^u\|_{H^2}} = \frac{\big(k_{\zeta_j}^u\big)' (\zeta)}{\|k_{\zeta_j}^u\|_{H^2}\|k_\zeta^u\|_{H^2}}.
\]
In particular, for $j=\ell$,
\[
q_{\ell \ell}\ = \frac{\big(k_\zeta^u\big)'(\zeta)}{\|k_\zeta^u\|_{H^2}^2}.
\]
\end{proof}

Note that, according to Lemma \ref{L:matrix}, the non-zero coefficients of the matrix $[q_{ij}]$ are either on the diagonal, or on the $\ell$-th row, or on the $\ell$-column, where $\ell$ is the index such that $\zeta_\ell=\zeta$. The coefficients $q_{ij} =\langle \text{Q}_\zeta^u \widetilde{k_j},\widetilde{k_i}\rangle_{H^2}$ are exploited in the proof of Theorem \ref{T:upper-estimate}.

\begin{proof}[Proof of Theorem \ref{T:upper-estimate}]
We can decompose any $f\in K_u$ using the complete orthonormal basis of normalized boundary kernels $\{\widetilde{k_n}\}$ as $f=\sum_{i}\gamma_i \widetilde{k_i}$. The coefficients $\gamma_i:=\langle f,\widetilde{k_i}\rangle_{H^2}$ satisfy the Parseval identity $\sum_i |\gamma_i|^2 = \|f\|_{H^2}^2$. By linearity, 
\begin{equation}\label{E:Tupper-estimateeq1}
\|\text{Q}_\zeta^uf\|_{H^2}\leq  |\gamma_\ell| \|\text{Q}_\zeta^u \widetilde{k_\ell}\|_{H^2}+ \left\|\text{Q}_\zeta^u\Big(\sum_{i\neq \ell}\gamma_i \widetilde{k_i} \Big)\right\|_{H^2}.
\end{equation}
We estimate each summand of \eqref{E:Tupper-estimateeq1} separately. 

Using the orthonormal basis $\{\widetilde{k_n}\}$ and coefficients $q_{ij}$, the Parseval identity yields
\[ 
\|\text{Q}_\zeta^u \widetilde{k_\ell}\|_{H^2}^2 = \sum_i \left| \langle \text{Q}_\zeta^u \widetilde{k_\ell}, \, \widetilde{k_i}\rangle_{H^2} \right|^2=\sum_i |q_{i\ell}|^2.
\]
Hence, by Lemma \ref{L:matrix}, 
\begin{equation}\label{estimateQzukl1} 
\|\text{Q}_\zeta^u\widetilde{k_\ell}\|_{H^2}^2 = \sum_i |q_{i\ell}|^2 = \frac{|(k_\zeta^u\big)'(\zeta)|^2}{\|k_\zeta^u\|_{H^2}^4} + \sum_{i\neq \ell} \frac{\|k_\zeta^u\|_{H^2}^2}{\|k_{\zeta_i}^u\|_{H^2}^2} \frac{1}{|\zeta_i-\zeta|^2}.
\end{equation}
By \eqref{E:formule-kzprime} and the Proposition \ref{T:aleksandrov1}[(ii)], it follows that
\begin{equation}\label{EQ:2der}
\frac{|(k_\zeta^u\big)'(\zeta)|^2}{\|k_\zeta^u\|_{H^2}^4}=\frac{|u''(\zeta)|^2}{4|u'(\zeta)|^2} \leq C |u'(\zeta)|^2,
\end{equation}
where $C$ is a positive constant that depends only on $u$. We can rewrite the second summand in \eqref{estimateQzukl1} as
\begin{align*}
\sum_{i\neq \ell} \frac{\|k_\zeta^u\|_{H^2}^2}{\|k_{\zeta_i}^u\|_{H^2}^2} \frac{1}{|\zeta_i-\zeta|^2} =& |u'(\zeta)| \sum_{i\neq \ell}  \frac{1}{|\zeta_i-\zeta|^2}\sigma_\alpha(\{\zeta_i\})\\ =& |u'(\zeta)|\int_{\mathbb{T}\setminus \{\zeta\}} \frac{1}{|\lambda-\zeta|^2}\,d\sigma_\alpha(\lambda).
\end{align*}
Now, by \cite[Equation $(16)$]{bessonov2015duality},
\[\int_{\mathbb{T}\setminus \{\zeta\}} \frac{1}{|\lambda-\zeta|^2}\,d\sigma_\alpha(\lambda) \leq \frac{C}{\sigma_\alpha(\{\zeta\})},\]
and we conclude that
\begin{equation}\label{Eq:qzell}
\|\text{Q}_\zeta^u(\widetilde{k_\ell})\|_{H^2}^2 \leq C |u'(\zeta)|^2.
\end{equation}
Note that we use the same symbol $C$ for a positive constant that depends only on $u$, although the various constants involved may be different from term to term. We have obtained the desired estimate for the first of the two summands of \eqref{E:Tupper-estimateeq1}, since
\[
|\gamma_\ell| \|\text{Q}_\zeta^u(\widetilde{k_\ell})\|_{H^2} \leq \left(\sum_i |\gamma_i|^2\right)^{\frac{1}{2}}\|\text{Q}_\zeta^u(\widetilde{k_\ell})\|_{H^2} \leq C |u'(\zeta)| \|f\|_{H^2}.
\]

To conclude the proof, we exhibit a similar estimate for the second summand in \eqref{E:Tupper-estimateeq1}. Notice that, now, the indexes $i$ involved are different from $\ell$. For $i\neq\ell$, by orthogonal decomposition,
\[
\text{Q}_\zeta^u \widetilde{k_i}= \sum_j \langle \text{Q}_\zeta^u \widetilde{k_i}, \widetilde{k_j}\rangle_{H^2} \, \widetilde{k_j} = \sum_j q_{ji} \widetilde{k_j} = q_{ii} \widetilde{k_i} + q_{\ell i}\widetilde{k_\ell},
\]
since, by Lemma \ref{L:matrix}, all the other coefficients $q_{ji}$ equal 0. Therefore, by linearity and once more by Lemma \ref{L:matrix},
\begin{align*}
\left\|\text{Q}_\zeta^u\Big(\sum_{i\neq \ell}\gamma_i \widetilde{k_i} \Big)\right\|^2_{H^2}
&= \left\|\sum_{i\neq \ell}\gamma_i \text{Q}_\zeta^u \widetilde{k_i} \right\|^2_{H^2} \\
&=\left\|\sum_{i\neq \ell}\gamma_i \left( q_{ii} \widetilde{k_i} + q_{\ell i}\widetilde{k_\ell} \right)\right\|^2_{H^2} \\
&= \left\|\sum_{i\neq \ell}\gamma_i\left(\frac{1}{\zeta_i-\zeta}\, \widetilde{k_i} + \frac{\big(k_{\zeta_i}^u\big)' (\zeta)}{\|k_{\zeta_i}^u\|_{H^2}\|k_\zeta^u\|_{H^2}} \widetilde{k_\ell}\right) \right\|^2_{H^2} \\
&= \left\|\sum_{i\neq \ell} \frac{\gamma_i}{\zeta_i-\zeta} \, \widetilde{k_i} + \frac{1}{\|k_\zeta^u\|_{H^2}}\left(\sum_{i\neq\ell}\gamma_i\frac{\big(k_{\zeta_i}^u\big)' (\zeta)}{\|k_{\zeta_i}^u\|_{H^2}} \right)\widetilde{k_\ell} \right\|^2_{H^2}\!\!\!.
\end{align*}
Since $\{\widetilde{k_n}\}$ is an orthonormal basis, the previous squared norm can be rewritten as the sum of squares
\begin{equation}\label{E:Tupper-estimateeq2}
\left\|\text{Q}_\zeta^u\Big(\sum_{i\neq \ell}\gamma_i \widetilde{k_i} \Big)\right\|^2_{H^2}= \sum_{i\neq \ell} \frac{|\gamma_i|^2}{|\zeta_i-\zeta|^2}+ \frac{1}{\|k_\zeta^u\|_{H^2}^2}\left|\sum_{i\neq\ell}\gamma_i\frac{\big(k_{\zeta_i}^u\big)' (\zeta)}{\|k_{\zeta_i}^u\|_{H^2}} \right|^2.
\end{equation}
In this last equation, we have rewritten the second and last summand of the original \eqref{E:Tupper-estimateeq1} in a more suitable form. Once again, we work separately on the two summands that appear in \eqref{E:Tupper-estimateeq2}. 

For the first term, we have
\[\sum_{i\neq \ell} \frac{|\gamma_i|^2}{|\zeta_i-\zeta|^2} \leq  \sup_{i\neq \ell} \frac{1}{|\zeta_i-\zeta|^2}\sum_i |\gamma_i|^2 = \frac{\|f\|_{H^2}^2}{\left(\inf_{i\neq \ell}|\zeta_i-\zeta|\right)^2}.\]
By assumption, $u$ is a one-component inner function, and this gives us information about the atoms. In particular, the atom $\zeta_i$ that is closest to $\zeta$ is one of the neighbors $\zeta_\pm$, so that
\[
\inf_{i\neq \ell}|\zeta_i-\zeta| = \min\{|\zeta-\zeta_+|,|\zeta-\zeta_-|\}.
\]
By Proposition \ref{T:Bessonov}, we conclude that
\[
\sum_{i\neq \ell} \frac{|\gamma_i|^2}{|\zeta_i-\zeta|^2} \leq \frac{C}{\sigma_\alpha(\{\zeta\})^2} \|f\|_{H^2}^2 = C |u'(\zeta)|^2\|f\|_{H^2}^2.
\]
To estimate the second summand of \eqref{E:Tupper-estimateeq2}, by adding and subtracting the same term and using Young's inequality, we obtain
\[
\left|\sum_{i\neq\ell}\gamma_i\frac{\big(k_{\zeta_i}^u\big)' (\zeta)}{\|k_{\zeta_i}^u\|_{H^2}} \right|^2 \leq2\left|\sum_{i}\gamma_i\frac{\big(k_{\zeta_i}^u\big)' (\zeta)}{\|k_{\zeta_i}^u\|_{H^2}}\right|^2 +2|\gamma_\ell|^2 \frac{\big|\big(k_{\zeta_\ell}^u\big)'(\zeta)\big|^2}{\|k_{\zeta_\ell}^u\|_{H^2}^2} .
\]
By Lemma \ref{L:boundaryderiv}, since the operator $\mathfrak{D}_\zeta \colon f\mapsto f'(\zeta)$ is linear and bounded on $K_u$, we have
\[
\sum_i \gamma_i \frac{\big(k_{\zeta_i}^u\big)'(\zeta)}{\|k_{\zeta_i}^u\|_{H^2}} 
= \sum_i \gamma_i  \frac{\mathfrak{D}_\zeta(k_{\zeta_i}^u)}{\|k_{\zeta_i}^u\|_{H^2}} 
= \mathfrak{D}_\zeta\left(\sum_i \gamma_i \widetilde{k_i}\right)
= \mathfrak{D}_\zeta (f)
= f'(\zeta).
\]
Notice that, by definition of the index $\ell$, $\widetilde{k_\ell}$ is the normalization of $k_\zeta^u$. By \eqref{L:boundaryderivnorm} and \eqref{Eq:qzell}, we have that

\begin{align*}
    |f'(\zeta)|^2 &\leq \|f\|_{H^2}^2\|\text{Q}_\zeta^u(k_\zeta^u)\|_{H^2}^2 \\
    &= \|f\|_{H^2}^2 \|k_\zeta^u\|_{H^2}^2\|\text{Q}_\zeta^u(\widetilde{k_\ell})\|_{H^2}^2\\ 
    & \leq C \|f\|_{H^2}^2 |u'(\zeta)|^3.
\end{align*}

This shows that
\[
\left|\sum_{i\neq\ell}\gamma_i\frac{\big(k_{\zeta_i}^u\big)' (\zeta)}{\|k_{\zeta_i}^u\|_{H^2}} \right|^2 \leq2C_u\|f\|_{H^2}^2\left|u'(\zeta)\right|^3 +2|\gamma_\ell|^2 \frac{\big|\big(k_{\zeta}^u\big)'(\zeta)\big|^2}{\|k_{\zeta}^u\|_{H^2}^2}.
\]
For the last term in the previous sum, as in \eqref{EQ:2der},
\[
|\gamma_\ell|^2 \frac{\big|\big(k_{\zeta}^u\big)' (\zeta)\big|^2}{\|k_{\zeta}^u\|_{H^2}^2} \leq C_u |\gamma_\ell|^2 |u'(\zeta)|^3 \leq C_u \|f\|_{H^2}^2 |u'(\zeta)|^3.
\]
Therefore, we can conclude that, for \eqref{E:Tupper-estimateeq2},
\begin{align*}
\notag \left\|\text{Q}_\zeta^u\Big(\sum_{i\neq \ell}\gamma_i \widetilde{k_i} \Big)\right\|^2_{H^2}& = \sum_{i\neq \ell} \frac{|\gamma_i|^2}{|\zeta_i-\zeta|^2}+ \frac{1}{\|k_\zeta^u\|_{H^2}^2}\left|\sum_{i\neq\ell}\gamma_i\frac{\big(k_{\zeta_i}^u\big)' (\zeta)}{\|k_{\zeta_i}^u\|_{H^2}} \right|^2  \\
&\leq C_u|u'(\zeta)|^2\|f\|_{H^2}^2 + \frac{2}{|u'(\zeta)|}\left(C_u \|f\|_{H^2}^2 |u'(\zeta)|^3\right) \\
&\leq C_u|u'(\zeta)|^2\|f\|_{H^2}^2.
\end{align*}
\end{proof}

\section{Proof of Theorem \ref{T:characterization}} \label{S:Example0}
If $u$ is a one-component inner function, then by Proposition \ref{T:aleksandrov1}, the Lebesgue measure of $\sigma(u)$ is zero. Furthermore, by Theorem \ref{T:upper-estimate}, the inequality \eqref{E:upper-estimate} holds.

Conversely, comparing \eqref{E:upper-estimate} with \eqref{E:spectrumQest}, we obtain the estimate
\[
|u'(\zeta)| \geq \frac{C_u}{\operatorname{dist}(\zeta, \sigma(u) \cap \mathbb{T})}, \qquad \zeta \in \mathbb{T} \setminus \sigma(u),
\]
which shows that $|u'|$ is unbounded on any open arc $\Delta \subseteq \mathbb{T} \setminus \sigma(u)$ such that $\overline{\Delta} \cap \sigma(u) \neq \emptyset$.

Additionally, comparing \eqref{E:upper-estimate} with \eqref{E:lower-estimate}, we deduce the bound 
\[
|u''(\zeta)| \leq C_u |u'(\zeta)|^2, \qquad \zeta \in \mathbb{T} \setminus \sigma(u),
\]
where $C_u$ is a positive constant that depends only on $u$. By the characterization given in Proposition \ref{T:aleksandrov1}, we conclude that $u$ must be one-component.

\section{An example} \label{S:Example}
We present an example showing that the lower estimate in Theorem \ref{T:lower-estimate} and the upper estimate in Theorem \ref{T:upper-estimate} are both optimal. Note that this example also shows that the exponents of the derivatives of $u$ involved in Theorems \ref{T:lower-estimate} and \ref{T:upper-estimate} are sharp.

\begin{example}
{\rm Let $u$ be the singular inner function associated with the measure $\delta_1$, i.e.,
\[ 
u(z) = \exp{\left(\frac{z+1}{z-1}\right)}, \qquad z\in\mathbb{D}.
\]
The function $u$ is a one-component inner function and its spectrum is the singleton $\{1\}$. According to Theorem \ref{T:upper-estimate}, for every $\zeta\neq1$,
\[
\|\text{Q}_\zeta^{u}\| \leq C |u'(\zeta)| = \frac{2C}{|\zeta-1|^2}.
\]
On the other hand, by directly computing the second derivative of $u$, we see that
\[
u''(z) = \frac{4z\,u(z)}{(z-1)^4}, \qquad z\in\mathbb{D}.
\]
Hence, by Theorem \ref{T:lower-estimate},
\[
\|\text{Q}_\zeta^{u}\|\geq \frac{1}{2}\frac{4}{|\zeta-1|^4}\frac{|\zeta-1|^2}{2}=\frac{1}{|\zeta-1|^2}, \qquad \zeta\neq1.
\]
Therefore, we have
\[
\|\text{Q}_\zeta^{u} \| \asymp \frac{1}{|\zeta-1|^2}, \qquad \zeta\neq1.
\]}
\end{example}

\section{Concluding remarks} \label{S:Example2}
Given a point $\zeta \in \mathbb{T}$, the \emph{local Dirichlet integral} is defined as
\[
\mathcal{D}_\zeta(f) := \frac{1}{\pi} \int_\mathbb{D} |f'(z)|^2 \frac{1 - |z|^2}{|z - \zeta|^2} \, dA(z), \qquad f \in \text{Hol}(\mathbb{D}),
\]
where $A$ denotes the two-dimensional Lebesgue measure. We define the \emph{local Dirichlet space}, denoted by $\mathcal{D}_\zeta$, as the space of holomorphic functions $f$ such that $\mathcal{D}_\zeta(f) < \infty$. This is a Hilbert space of holomorphic functions with respect to the norm
\[
\|f\|_{\mathcal{D}_\zeta}^2 = \|f\|_{H^2}^2 + \mathcal{D}_\zeta(f),
\]
and it serves as an example of a more general class of spaces known as \emph{harmonically weighted} Dirichlet spaces. These spaces were introduced by Stefan Richter in 1991 \cite{Richter1991} to study cyclic analytic two-isometries. The local Dirichlet integral is also given by the important formula
\begin{equation}\label{E:Jdouglas}
\mathcal{D}_\zeta(f) = \int_{\mathbb{T}} \bigg| \frac{f(\lambda) - f(\zeta)}{\lambda - \zeta} \bigg|^2 \, dm(\lambda), \qquad f \in \text{Hol}(\mathbb{D}),
\end{equation}
where $m$ denotes the normalized Lebesgue measure on $\mathbb{T}$. For an in-depth discussion of these spaces, see \cite{MR3185375}.

\begin{enumerate}[(i)]
\item According to \eqref{E:Jdouglas}, for an inner function $u$ and a point $\zeta \in \mathbb{T} \setminus \sigma(u)$, we have
\[
\mathcal{D}_\zeta(f) = \|\text{Q}_\zeta^u(f)\|_{H^2}^2, \qquad f \in K_u.
\]
Thus, the condition $\zeta \in \mathbb{T} \setminus \sigma(u)$ is sufficient for the inclusion $K_u \subset \mathcal{D}_\zeta$ to hold. In \cite{bellavita2023embedding}, the authors showed that $\zeta \in \mathbb{T} \setminus \sigma(u)$ is also a necessary condition for the inclusion $K_u \subset \mathcal{D}_\zeta$. Moreover, they proved that this yields a bounded embedding $\iota\colon K_u \hookrightarrow \mathcal{D}_\zeta$, with the norm of the embedding computed via the norm of $\text{Q}_\zeta^u$ as
\[
\|\iota\|_{K_u \hookrightarrow \mathcal{D}_\zeta}^2 = 1 + \|\text{Q}_\zeta^u\|^2.
\]

\item The norm of the operator $\text{Q}_\zeta^u$ is also related to  the norm of $\text{R}(\zeta,S_u):=(\zeta I-S_u)^{-1}$, the resolvent operator of the compressed shift. More explicitly, for  $\zeta \in \mathbb{T}\setminus \sigma(u)$, we have
\begin{equation} \label{E:resaysmp}
    \frac{1}{2}\left(\|\text{Q}_\zeta^u\|^2 + |u'(\zeta)|\right)\leq \|\text{R}(\zeta,S_u)\|^2 \leq \|\text{Q}_\zeta^u\|^2 + |u'(\zeta)|.
\end{equation}
In fact, since $X_u^*=S_u$, 
\[
\|\text{R}(\zeta,S_u)\|= \|\text{R}(\overline{\zeta},X_u)\|= \|(\overline{\zeta}\text{I}-X_u)^{-1}\| =\|(\text{I}-\zeta X_u)^{-1}\|.
\]
From the definition \eqref{E:def-Qu} of the operator $\text{Q}_\zeta^u$, it follows that
\[
(\text{I}-\zeta X_u)^{-1} = \text{I} + \zeta \text{Q}_\zeta^u.
\]
Thus, we have 
\[
(\text{I}-\zeta X_u)^{-1}f(z) = f(z) + \zeta \frac{f(z)-f(\zeta)}{z-\zeta} = \frac{zf(z)-\zeta f(\zeta)}{z-\zeta}.
\]
Computing the norm, we obtain 
\[
\|(\text{I}-\zeta X_u)^{-1}f\|_{H^2}^2 = \int_\mathbb{T} \bigg| \frac{\lambda f(\lambda)-\zeta f(\zeta)}{\lambda-\zeta} \bigg|^2 dm(\lambda) = \mathcal{D}_\zeta(Sf),
\]
where $S$ denotes the shift operator. However, we know that 
\[
\mathcal{D}_\zeta(Sf) = \mathcal{D}_\zeta(f)+|f(\zeta)|^2 = \|\text{Q}_\zeta^u f\|^2 + |f(\zeta)|^2.
\]
See \cite[Theorem 8.1.2]{MR3185375}. Hence, by Proposition \ref{T:thm724model}, we conclude that
\[ 
\|\text{R}(\zeta,S_u)\|^2 \leq \|\text{Q}_\zeta^u\|^2 + \|k_\zeta^u\|_{H^2}^2 = \|\text{Q}_\zeta^u\|^2 + |u'(\zeta)|.
\]
On the other hand,
\[ 
\|\text{R}(\zeta,S_u)\|^2 \geq \max\{\|\text{Q}_\zeta^u\|^2, |u'(\zeta)|\},
\]
and \eqref{E:resaysmp} follows at once.
\end{enumerate}

\bibliographystyle{plain}
\bibliography{Model-References}

\begin{thebibliography}{10}

\bibitem{Aleksandrov2000}
Alexei Aleksandrov.
\newblock On embedding theorems for coinvariant subspaces of the shift
  operator. {II}.
\newblock {\em Journal of Mathematical Sciences}, 110:2907--2929, 2000.

\bibitem{BARANOV2005116}
Anton~D. Baranov.
\newblock {B}ernstein-type inequalities for shift-coinvariant subspaces and
  their applications to {C}arleson embeddings.
\newblock {\em Journal of Functional Analysis}, 223(1):116--146, 2005.

\bibitem{bellavita2023embedding}
Carlo Bellavita and Eugenio~A. Dellepiane.
\newblock Embedding model and de {B}ranges-{R}ovnyak spaces in {D}irichlet
  spaces.
\newblock {\em Complex Variables and Elliptic Equations}, 0(0):1--19, 2024.

\bibitem{Bellavita2022O}
Carlo Bellavita and Artur Nicolau.
\newblock One component bounded functions.
\newblock {\em Computational Methods and Function Theory}, pages 1--28, 2022.

\bibitem{bessonov2015duality}
Roman~V. Bessonov.
\newblock Duality theorems for coinvariant subspaces of ${H^1}$.
\newblock {\em Advances in Mathematics}, 271:62--90, 2015.

\bibitem{Cima2020}
Joseph~A. Cima and Raymond Mortini.
\newblock {\em One-Component Inner Functions II}, pages 39--49.
\newblock Springer International Publishing, 2020.

\bibitem{cima2000backward}
Joseph~A. Cima and William~T. Ross.
\newblock {\em The Backward Shift on the {H}ardy Space}.
\newblock Number v. 79 in Mathematical surveys and monographs. American
  Mathematical Society, 2000.

\bibitem{MR3185375}
Omar El-Fallah, Karim Kellay, Javad Mashreghi, and Thomas Ransford.
\newblock {\em A primer on the {D}irichlet space}, volume 203 of {\em Cambridge
  Tracts in Mathematics}.
\newblock Cambridge University Press, Cambridge, 2014.

\bibitem{MR3617311}
Emmanuel Fricain and Javad Mashreghi.
\newblock {\em The theory of {$\mathcal{H}(b)$} spaces. {V}ol. 2}, volume~21 of
  {\em New Mathematical Monographs}.
\newblock Cambridge University Press, Cambridge, 2016.

\bibitem{MR3526203}
Stephan~R. Garcia, Javad Mashreghi, and William~T. Ross.
\newblock {\em Introduction to model spaces and their operators}, volume 148 of
  {\em Cambridge Studies in Advanced Mathematics}.
\newblock Cambridge University Press, Cambridge, 2016.

\bibitem{MR0113142}
Mikhail~S. Liv\v{s}ic.
\newblock On a class of linear operators in {H}ilbert space.
\newblock {\em American Mathematical Society Translations: Series 2},
  13:61--83, 1960.

\bibitem{MR0150592}
James~W. Moeller.
\newblock On the spectra of some translation invariant spaces.
\newblock {\em Journal of Mathematical Analysis and Applications}, 4:276--296,
  1962.

\bibitem{nicolau2021}
Artur Nicolau and Atte Reijonen.
\newblock A characterization of one-component inner functions.
\newblock {\em Bulletin of the London Mathematical Society}, 53(1):42--52,
  2021.

\bibitem{MR827223}
Nikolai~K. Nikolski.
\newblock {\em Treatise on the shift operator}, volume 273 of {\em Grundlehren
  der mathematischen Wissenschaften [Fundamental Principles of Mathematical
  Sciences]}.
\newblock Springer-Verlag, Berlin, 1986.

\bibitem{Reijonen_2019}
Atte Reijonen.
\newblock Remarks on one-component inner functions.
\newblock {\em Annales Fennici Mathematici}, 44(1):569–580, 2019.

\bibitem{Richter1991}
Stefan Richter.
\newblock A representation theorem for cyclic analytic two-isometries.
\newblock {\em Transactions of the American Mathematical Society},
  328(1):325--349, 1991.

\bibitem{Ross2013LENSLO}
William~T. Ross.
\newblock Lens lectures on {A}leksandrov-{C}lark measures.
\newblock In {\em https://api.semanticscholar.org/CorpusID:947069}, 2013.

\bibitem{saksman2007}
Eero Saksman.
\newblock An elementary introduction to {C}lark measures.
\newblock In {\em Topics in complex analysis and operator theory}, page
  85–136, Spain, 2007. Universidad de M{\'a}laga.
\newblock Winter School in Complex Analysis and Operator Theory.

\bibitem{Volberg1998}
Alexander Volberg and Sergei Treil.
\newblock Imbedding theorems for the invariant subspaces of the backward shift
  operator.
\newblock {\em Journal of Soviet Mathematics}, 42:1562--1572, 1998.

\end{thebibliography}

\end{document}